\newcommand{\cA}{{\mathcal{A}}}
\newcommand{\cG}{{\mathcal{G}}}
\newcommand{\cD}{{\mathcal{D}}}
\newcommand{\cS}{{\mathcal{S}}}
\newcommand{\cL}{{\mathcal{L}}}
\newcommand{\cX}{{\mathcal{X}}}
\newcommand{\cK}{{\mathcal{K}}}
\newcommand{\cT}{{\mathcal{T}}}
\newcommand{\cO}{{\mathcal{O}}}
\newcommand{\cH}{{\mathcal{H}}}
\newcommand{\cY}{{\mathcal{Y}}}
\newcommand{\be}{{\textbf{e}}}
\newcommand{\bX}{\textbf{X}}
\newcommand{\bY}{\textbf{Y}}
\newcommand{\bd}{\textbf{d}}
\newcommand{\bq}{\textbf{q}}
\newcommand{\bD}{\textbf{D}}
\newcommand{\bA}{\textbf{A}}
\newcommand{\bz}{\textbf{z}}
\newcommand{\br}{\textbf{r}}
\newcommand{\bw}{\textbf{w}}
\newcommand{\bv}{\textbf{v}}
\newcommand{\bap}{\pmb{\alpha}} 
\newcommand{\bpii}{\pmb{\pi}} 
\newcommand{\bPi}{\pmb{\Pi}} 
\newcommand{\bld}{{\pmb{\lambda}}}
\newcommand{\beps}{{\pmb{\epsilon}}}
\newcommand{\bpsi}{{\pmb{\psi}}}
\newcommand{\bPsi}{{\pmb{\Psi}}}
\newcommand{\bphi}{{\pmb{\phi}}}
\newcommand{\bmu}{{\pmb{\mu}}}
\newcommand{\bbR}{\mathbb{R}}
\newcommand{\bbE}{\mathbb{E}}
\newcommand{\bbP}{\mathbb{P}}
\newcommand{\bbI}{\mathbb{I}}
\newenvironment{proof}[1][Proof]{\noindent\textbf{#1.} }{\ \rule{0.5em}{0.5em}}
\DeclarePairedDelimiter\floor{\lfloor}{\rfloor}
\def\htien#1{}
\newcommand{\T}{\text{\tiny T}}
\newcommand{\RE}{\textsc{RE}}
\newcommand{\SR}{\textsc{S}}
\newcommand{\CC}{\textsc{CT}}
\newcommand{\CT}{\textsc{CT}}
\newcommand{\DS}{\textsc{DS}}
\newcommand{\EV}{\textsc{EV}}
\newcommand{\BD}{\text{BD}}
\newcommand{\KL}{\text{KL}}
\newtheorem{theorem}{Theorem}
\newtheorem{definition}{Definition}
\newtheorem{proposition}{Proposition}
\title{A Relation Analysis of Markov Decision Process Frameworks}
\author{%
  Tien Mai\thanks{Corresponding author} \\
 SIS, Singapore Management University\\
  \texttt{atmai@smu.edu.sg} \\
 \And
  Patrick Jaillet \\
  EECS, Massachusetts Institute of Technology\\
  \texttt{Jaillet@mit.edu} \\
}
\begin{document}

\maketitle

\begin{abstract}
 We study the relation between different Markov Decision Process (MDP) frameworks in the machine learning and econometrics literatures, including the standard MDP, the entropy and general regularized  MDP, and stochastic MDP, where the latter is based on the assumption that the reward function is stochastic and follows a given distribution. We show that the entropy-regularized MDP is equivalent to a stochastic MDP model, and  is strictly subsumed by the general regularized MDP. Moreover, we propose a distributional stochastic MDP framework by assuming that the distribution of the reward function is ambiguous. We further show that the distributional stochastic MDP is equivalent to the regularized MDP, in the sense that they always yield the same optimal policies. We also provide a connection between stochastic/regularized MDP and constrained MDP. Our work gives a unified view on several important MDP frameworks, which would lead new ways to interpret the (entropy/general) regularized MDP frameworks through the lens of stochastic rewards and vice-versa. Given the recent popularity of  regularized MDP in (deep) reinforcement learning, our work brings new understandings of how such algorithmic schemes work and suggest ideas to develop new ones. 
\end{abstract}

\section{Introduction}

Markov decision processes (MDPs) are appealing for various problems involving sequential decision-making. Beside the standard MDP framework \cite{Puterman2014markov}, which is already popular  in the machine learning, operations research and econometrics communities, the literature  has seen several variant MDP frameworks proposed over the recent decades, with various motivations. For example, relative entropy regularizers have been added to the reward function  to improve exploration and compositionality in reinforcement learning (RL), e.g., Soft-Q learning \citep{Fox2015taming,Schulman2017equivalence,Haarnoja2017RL_ER} and Sotf-Actor-Critic \citep{Haarnoja2018softa,Haarnoja2018softb}. Kullback-Leibler (KL) divergence regularizers have been used  to penalize the ``\textit{divergence}'' between two conservative policies, with the motivation of preventing earlier convergence to sub-optimal policies in a policy iteration algorithm. This idea  have been implemented in many state-of-the-art RL algorithms, e.g., Dynamic Policy Programming (DPP) \citep{Azar2012dynamic},  Maximum A Posteriori
Policy Optimization (MPO) \cite{Abdolmaleki2018maximum,Abdolmaleki2018relative,Mankowitz2019robust}.
\cite{Lee2018sparse,Chow2018path} add  a Tsallis entropy to the Markov problem with the motivation of having sparse and non-deterministic policies, and a theory a general regularized MDP framework has been studied in \cite{Geist2019theory}.      
The use of  entropy-regularizers in MDP is also advantageous in removing ambiguity in some imitation learning/inverse reinforcement learning (IRL) applications \citep{ziebart2010_IRL_Causal,Ho2016generative,Fu2017RobustIRLlearning,Finn2016_connectionIRL}.  

In the econometrics community, the stochastic  MDP framework (a.k.a dynamic discrete choice modeling) have been proposed and intensively studied since 1980s  \citep{Rust1987GMC,Rust1988MLE} (see \cite{Aguirregabiria2010dynamic} for a survey). This framework is based on the assumption that the reward function is no-longer deterministic but stochastic and follows a given distribution, and decisions are made according to each realization of the random rewards. The main motivation is to overcome the deterministic notation of optimal policies resulting from the standard MDP by allowing  the agent to make different decisions when the rewards vary, thus improve exploration. 
Such stochastic Markov models, under some settings, yield randomized/soft policies, which is helpful in casting some behavioral learning procedures as probabilistic inference problems \citep{Rust1988MLE}. From this viewpoint, the stochastic MDP framework shares some common motivations with the entropy-regularized MDP one \cite{ziebart2010_IRL_Causal}.    

In this paper, we make the first attempt to thoroughly study the relation between aforementioned MDP frameworks, aiming at providing new ways to view each MDP modeling scheme through different angles and perspectives. We consider a broad class of MDP frameworks, including the standard MDP, entropy-regularized MDP \citep{ziebart2010_IRL_Causal}, general regularized MDP \citep{Geist2019theory}, stochastic MDP with i.i.d Gumbel distributed rewards (a.k.a logit dynamic discrete choice), and general stochastic MDP \citep{Rust1987GMC}. We summarize our contributions as follows.
\begin{itemize}
    \item [(i)] 
We  propose a new MDP framework, called \textit{distributional stochastic MDP}, by assuming that the distribution of the reward function in stochastic MDPs is not given certainly, but belongs to an ambiguity set of distributions.  We then show that the corresponding Markov problem is more general and more tractable, as compared to the standard stochastic MDP, 
in the sense an optimal policy can be found by value iteration and convex optimization, under some distributional settings with marginal information.
\item[(ii)] We show that, interestingly, there is a nested relation between these MDP frameworks, that is, (i) the entropy-regularized MDP and  
stochastic MDP with i.i.d Gumbel distributed rewards are ``\textit{equivalent}'' and strictly covered by the stochastic MDP, and (ii) the general regularized MDP and the distributional stochastic MDP are equivalent and strictly cover other frameworks. Here,  ``\textit{equivalence}'' 
means that the uses of these MDP frameworks are equivalent, in both RL and IRL perspectives. There are many interesting points that can be seen from this nested relationship, namely, we can view (entropy) regularized MDP through the lens of stochastic-reward MDP and vice-versa. This also indicates  the robustness of regularized MDP from a viewpoint  of stochastic-reward with ambiguous distribution. The results also lead to new regularizers  with interpretations from stochastic-reward perspectives. 
\item[(iii)] We also make a connection between regularized/stochastic MDP and constrained MDP \cite{Altman1999constrained} by showing that any aforementioned MDP model can be \textit{equivalently} represented by a constrained MDP model. 
That is, adding regularizers or random noises to the reward function is \textit{equivalent} to imposing some constraints on the desired optimal policies. We also provide discussions on how this constrained-MDP viewpoint would be beneficial to develop generalized RL algorithms.
\end{itemize}
The equivalence between entropy-regularized MDP and logit dynamic discrete choice in the finite-horizon case was recognized previously by \cite{Ermon2015learning} and in our work we show it holds for the infinite case as well. Indeed, we cover a much broader class of MDP frameworks. Our work generally gives a unified picture of several useful MDP frameworks in the literature, thus brings new understandings of how they work and  suggests ideas to develop new algorithms.     

The paper is structures as follows. In Section \ref{sec:problem-description} we revisit some well-known MDP frameworks in the literature. 
Section \ref{sec:DS-MDP} introduces the distributional stochastic MDP framework.
Section \ref{sec:nested-relation} presents the nested relationship between regularized and stochastic MDP. We show a connection between constrained MDP and other frameworks in Section \ref{sec:CT-MDP} and finally, Section \ref{sec:conclude} concludes. Proofs and some relevant discussions are given in the supplementary material. We use $|\cS|$ to denote the cardinality of set $\cS$. Boldface characters represent matrices (or vectors) or a collection of values/functions. 

\section{Preliminaries}\label{sec:problem-description}
Consider a infinite-horizon Markov decision process (MDP) for an agent with finite states and actions, defined as $(\cS,\cA,\bPi,\bq, \br,\gamma)$, where $\cS$ is a set of states $\cS = \{1,2,\ldots,|\cS|\}$, $\cA$ is a finite set of actions,
$\bPi = \{\bpii^0,\bpii^1,\ldots\}$ is a policy function 
where $\pi^t(a_t|s_t)$ is the probability of making action $a_t\in\cA$ in state $s_t\in\cA$ at time $t\in \{0,\ldots,\infty\}$, 
$\bq:\cS\times \cA\times\cS \rightarrow [0,1]$ is a transition probability function, i.e., $q(s'_t|a_t,s_t)$ is the probability of moving to state $s'_t\in\cS$ from $s_t\in \cS$ by performing action  $a_t\in \cA$ at time $t$, $\br$ is a vector of reward functions such that $r(a_t|s_t)$ is a reward function 
associated with state $s_t\in\cS$ and action $a_t\in\cA$ at time $t$, and $\gamma\in[0,1]$ is a discount factor. 

In the following we review different MDP frameworks, including the standard one,  regularized MDP \citep{ziebart2010_IRL_Causal}, stochastic MDP \citep{Rust1988MLE,Rust1987GMC} (a.k.a dynamic discrete choice). We also briefly discuss how optimal policies can be computed. They are important ingredients to study the relation. 

\textbf{{Standard MDPs.}}
In a standard MDP model, the aim  is to find an optimal sequential policy $\bPi$ that maximizes the expected discounted reward
\begin{align}
\sup_{\substack{\bpii^t \in \Delta^\pi \\ t=0,\ldots,\infty}}\Bigg\{ \bbE_{\tau \sim (\bPi,\bq)}\Bigg[\sum_{t=0}^{\infty}\gamma^{[t]} r(a_t|s_t)  \Bigg]\Bigg\},\label{prob:DET-Max-EP}
\end{align}
where $\tau$ is a trajectory $\tau = (s_0,a_0,\ldots,s_{\infty},a_{\infty})$ and  $\Delta^\pi$ is the set of all possible policy functions, i.e., $\Delta^\pi = \{\bpii|\ \pi(a_t|s_t)\geq 0,\ \sum_{a_t}\pi(a_t|s_t) = 1\}$. 
Here, $\gamma^{[t]}$ refers to \textit{``$\gamma$ to the power of $t$''} to distinguish it from a superscript $t$. 
It is well-known the problem  admits a deterministic notion of optimality, i.e., optimal policies are deterministic. 

\textbf{{Regularized MDPs.} }
In a general regularized MDP model \citep{Geist2019theory}, a set of regularizers  $\bphi$  are added to the rewards; $\bphi = \{\phi_{s}(\bpii_{s}),\ s\in\cS \}$, where $\bpii_{s} = \{\pi(a|s),\forall a\in \cA\}$, for any $s\in \cS$.
The Markov decision problem can be stated as
\begin{align}
\sup_{\substack{\bpii^t \in \Delta^\pi \\ t=0,\ldots}}\Bigg\{ \bbE_{\tau \sim (\bPi,\bq)}\Bigg[\sum_{t=0}^{\infty}\gamma^{[t]} \Big( r(a_t|s_t) + \phi_{s_t}(\bpii^t_{s_t}) \Big)\Bigg]\Bigg\}.\label{prob:Regularized-MDPs}
\end{align}
It is typically assumed that $\phi_s(\bpii_s)$ is (strictly) concave. 
To compute an optimal policy, we define, for any $s\in \cS$, the value functions
\begin{align}
    V^{\RE,\bphi}(s) = \sup_{\substack{\bpii^t \in \Delta^\pi\\t=0,\ldots}} \bbE_{\tau \sim (\bPi,\bq)}\Bigg[\sum_{t=0}^{\infty}\gamma^{[t]} \Big( r(a_t|s_t) + \phi_{s_t}(\bpii^t_{s_t}) \Big)\Bigg| s_0 = s\Bigg];
\end{align}
and the Bellman update
$
\cT^{\RE,\bphi}[V](s) = \sup_{\bpii_s}\bbE_{\bpii_s}[ r(a|s)+ \gamma\sum_{s'}q(s'|s,a)V(s')]  + \phi_s(\bpii_s).
$
Then one can show that  $\cT^{\RE,\bphi}[V]$ is a contraction \citep{Geist2019theory},  guaranteeing that the Bellman equation always have a unique fixed point solution. Furthermore, 
 $V^{\RE,\bphi}$ defined above is the unique  solution to  $\cT^{\RE,\bphi}[V] = V$. Moreover, if we denote by $\phi^*_s(\bw_s)$ the Legendre-Fenchel transform (or convex conjugate) of $-\phi_s(\bpii_s)$, i.e.,
\begin{equation}
\label{eq:R-bellman-eq1}
\phi^*_s(\bw_s) = \sup_{\bpii_s} \Big\{\bw_s^\T \bpii_s + \phi_s(\bpii_s)\Big\}, \forall \bw_s\in \bbR^{|\cA|},    
\end{equation}
then  the value iteration can be updated as $\cT^{\RE,\bphi}[V](s) = \phi^*_s(\bw_s)$, where $\bw_s\in \bbR^{|\cA|}$ with elements $w_{sa} = r(a|s)+\sum_{s'}q(s'|s,a)V(s')$. 
If $\phi_s(\bpii_s) = -\sum_{a\in\cA} \pi(a|s)\ln \pi(a|s)$ (relative entropy), the model becomes the entropy-regularized one.
We call this MDP framework as \textbf{Entropy-Regularized MDP} (ER-MDP).


\textbf{{Stochastic MDPs (a.k.a. Dynamic Discrete Choice Modeling).} }
This framework has been developed and widely studied in the econometrics community for the problem of learning/predicting people behavior 
from observations (a.k.a IRL) \citep[e.g.,][]{Rust1987GMC}. In this framework, it is assumed that the reward function is stochastic and can be written as  $\widetilde{r}(a|s) = r(a|s)+\epsilon(a|s)$, where $r(a|s)$ is deterministic and $\epsilon(a|s)$ is a random noise. 
Let $\beps_s  =\{\epsilon(a|s)|\ a\in \cA\}$.
We further assume that the distributions of $\beps_s$, $\forall s\in\cS$, are continuous and independent over states.
We define a sequence of decision rule $\bPsi = \{\bpsi^0,\bpsi^1,\ldots\}$, where $\bpsi^t$ is a set of decision rules at time $t$, i.e., $\bpsi^t = \{\psi(s_t|\beps_{s_t}),\ s_t\in\cS\}$, where $\psi(s_t|\beps_{s_t}):\bbR^{|\cA|}\rightarrow \cA$ is a deterministic policy at state $s_t$ under a realization of the random noise $\beps_{s_t}$. The aim is to select a decision rule  to maximize the expected discounted stochastic reward
\begin{align}
   \sup_{\bpsi^0,\bpsi^1,...}  \bbE_{\beps} \left\{ \left[  \bbE_{\tau \sim (\bPsi,\bq)}\Bigg[\sum_{t=0}^{\infty}\gamma^{[t]} \Big(r(a_t|s_t) +\epsilon(a_t|s_t)\Big) \Bigg] \right]\right\}.
\end{align}
To solve the above Markov problem, we also define the value function
\begin{align}
V^{\SR}(s) &=  \sup_{\bpsi^0,\bpsi^1,...} \bbE_{\beps} \left\{ \left[  \bbE_{\tau \sim (\bPsi,\bq)}\Bigg[\sum_{t=0}^{\infty}\gamma^{[t]} \Big(r(a_t|s_t) +\epsilon(a_t|s_t)\Big)\Big|\ s=s_0 \Bigg]\right] \right\},\ \forall a\in\cS.  \nonumber 
\end{align}
Then \cite{Rust1988MLE} shows that $V^{\SR}$ is a solution to the contraction mapping
$
 \cT^{\SR}[V](s) = \sup_{\bpsi_s}\{\bbE_{\beps_s}[z(s,a|V)] \}  
$
where 
$z(s,a|V)  = r(a|s)+ \epsilon(a|s) +\bbE[V(s')]$, 
and there is an memory-less optimal decision rule specified as
$
\psi(s|\beps_s) = \text{argmax}_{a\in \cA} \left\{ z(s,a,V^{\SR})\right\}.
$
Basically, a set of decision rule $\{\bpsi_s,s\in\cS\}$ can be viewed as a policy where  the probability  of making  action $a$ at state $s$ is also the density of making action $a$, i.e.,  $\pi(a|s)= \bbP_{\beps}(\phi(s|\beps_s) = a)$.  
In particular, if $\epsilon(a|s)$ are i.i.d and follow the Gumbel distribution (or Extreme Value Type I distribution), we obtain a MDP framework so-called \textit{the logit-based dynamic discrete choice model} \citep{Rust1988MLE}. 
We call this model as\textbf{ Stochastic EV  MDP} (or EV-MDP for short). 

\section{Distributional Stochastic  MDPs}
\label{sec:DS-MDP}
The Bellman update in a stochastic MDP model involves an expectation of the form $\bbE_{\beps_s}\left[\max_{a\in\cA} \{w_{sa}+\epsilon(a|s))\} \right]$, which is typically difficult to evaluate even when the distribution of $\beps_s$ is well specified \citep{steele1997probability}. To over come this, inspired by the persistent model studied in  \citep{Bertsimas2006persistence,Natarajan2009persistency},
we introduce a new MDP framework, called \textit{Distributional Stochastic MDP} (DS-MDP), by extending the S-MDP framework by assuming that  the distribution of the random terms $\beps$ is not given certainly, but belongs to a set  of distributions $\Xi$. 
This new setting makes the corresponding Markov problem easier to solve by convex optimization, under some classes of distributions with marginal information. We will show later this new framework is strongly related to the regularized MDP one. 

Under the ambiguity setting, we define a Markov problem where the objective  is to select a sequence of distributions $\bD = \{\bd^0,\bd^1,\ldots\}$ and a sequence of decision rules $\bPsi = \{\bpsi^0,\bpsi^1,\ldots\}$ in such a way that the following expected reward function is maximized.
\begin{align}
   \sup_{\substack{\bd^t \in  \Xi \\ t=0,1,\ldots} }\left\{ \sup_{\bpsi^0,\bpsi^1,...} \left\{  \bbE_{\substack{\beps^t \sim \bd^t\\ t=0,\ldots} } \left[  \bbE_{\tau \sim (\bpsi,\bq)}\Bigg[\sum_{t=0}^{\infty}\gamma^{[t]} \Big(r(a_t|s_t) +\epsilon(a|s)\Big) \Bigg] \right] \right\}\right\},\label{eq:DS-infinite-horizon-form}
\end{align}
where  $\beps$ the random noise vector  at time $t$. That is, at time step $t$, we select a 
distribution $\bd^t\in\Xi$ describing how the random noises $\beps$ vary and a
decision rule $\bpsi^t$ to select action according to each realization of the random noise vector. 
To make the Markov problem  tractable, we assume that
$\Xi$ is state-wise decomposable, i.e., $\Xi = \otimes_{s\in\cS}\Xi_s$, where $\Xi_s$ is an ambiguity set for the distributions of the noise vector $\beps_s$ at state $s$. This assumption allows to solve the Markov problem by value iteration or policy iteration, similarly to other MDP frameworks. 

We define the value function (expected accumulated discounted reward) $V^{\DS,\Xi}: \cS \rightarrow \bbR$ as 
\begin{equation}
\label{eq:DS-defineV}
V^{\DS,\Xi}(s) =  \sup_{\substack{\bd^t \in\Xi \\ t=0,\ldots}} \sup_{\bpsi^0,\bpsi^1,...} \left\{  \bbE_{\substack{\beps^t \sim \bd^t \\ t=0,\ldots}} \left[  \bbE_{\tau \sim (\bPsi,\bq)}\Bigg[\sum_{t=0}^{\infty}\gamma^{[t]} \Big(r(a_t|s_t) +\epsilon(a_t|s_t)\Big)\ \Bigg|s_0=s \Bigg] \right] \right\},    
\end{equation}
and the mapping
\begin{equation}
\label{eq:DS-mapping}
\cT^{\DS,\Xi}[V](s) = \sup_{d_s \in \Xi_s} \left\{\bbE_{\beps_s \sim d_s}\left[\max_{a\in\cA}\left\{ r(a|s)+ \epsilon(a|s) + \gamma\sum_{s'}q(s'|s,a)V(s')\right\} \right] \right\}.     
\end{equation}
The following theorem shows that basic theoretical properties that hold for other MDP frameworks also hold for the DS-MDP.
\begin{theorem}
\label{th:DS-contractionmapping-optimal-policy}
For any nonempty set of distributions $\Xi$, we have the following properties 
\begin{itemize}
    \item[(i)] $\cT^{\DS,\Xi}[V]$ is monotonically increasing  and translation invariant, i.e., $\cT^{\DS,\Xi}[V] \geq \cT^{\DS,\Xi}[V']$ for any $V,V'\in\bbR^{|\cS|}, V\geq V'$, and $\cT^{\DS,\Xi}[V + \alpha \be] = \cT^{\DS,\Xi}[V] + \alpha$, where $\alpha$ is a scalar and $\be$ is a unit vector of size $|\cS|$. 
    \item[(ii)] $\cT^{\DS,\Xi}[V]$ is a contraction mapping of parameter $\gamma$ w.r.t. the infinity norm, i.e., for any $V,V'\in\bbR^{|\cS|}$ we have $||\cT^{\DS,\Xi}[V] - \cT^{\DS,\Xi}[V']||_\infty \leq \gamma ||V-V'||_\infty$. 
    \item[(iii)] If $\sup_{d_s\in\Xi}\bbE_{\beps_s\sim d_s}[\max_a\{r(a|s)+\epsilon(a|s)\}]$ are bounded from above for all $s\in\cS$, then  $V^{\DS,\Xi}$ is the unique fixed point solution to the system $\cT^{\DS,\Xi}[V] = V$. 
    \item[(iv)] The densities of the optimal decision rules (i.e. optimal policy) $\bpsi_s , \forall s\in \cS$, can be computed as
\[
\pi^{\DS,*} (a|s) =\partial \varphi_s(\bw_s)/\partial w_{sa}, \ \forall a\in\cA, s\in\cS
\]
where $\bw_s$ is a vector of size $|\cA|$ with entries $ w_{sa} = r(a|s)+ \gamma\sum_{s'}q(s'|s,a)V^{\DS,\Xi}(s')$ and  
\begin{align}
 \varphi_s(\bw_s) &= \sup_{d_s\in\Xi_s}\left\{\bbE_{\beps_s \sim d_s}[\max_a \{ w_{sa}+\epsilon(a|s)\}]\right\}.\label{eq:DS-expected-phi} 
\end{align}
\end{itemize}
\end{theorem}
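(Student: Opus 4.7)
The four parts follow the classical Bellman-operator template, with an extra layer coming from the inner supremum over $\Xi_s$. I will take them in the stated order, since (ii) and (iii) lean on (i).

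For part (i), the idea is to verify that both monotonicity and translation invariance propagate through each operation in the definition \eqref{eq:DS-mapping}. The affine map $V \mapsto r(a|s) + \epsilon(a|s) + \gamma \sum_{s'} q(s'|s,a)V(s')$ is coordinatewise nondecreasing in $V$ (since $q \ge 0$) and, because $\sum_{s'} q(s'|s,a) = 1$, shifts by $\gamma \alpha$ whenever $V$ is shifted by $\alpha \be$. The operations $\max_{a\in\cA}$, $\bbE_{\beps_s \sim d_s}[\cdot]$, and $\sup_{d_s \in \Xi_s}$ all preserve pointwise monotonicity and commute with additive constants, so the two properties carry through unchanged (with the stated shift coefficient being $\gamma$ after composition with the discount factor).

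For part (ii), the usual sandwich trick becomes mechanical once (i) is in hand. Setting $\delta = \|V - V'\|_\infty$, one has $V' - \delta \be \le V \le V' + \delta \be$ coordinatewise, and applying the monotonicity and translation-invariance from (i) gives $\cT^{\DS,\Xi}[V'] - \gamma \delta \le \cT^{\DS,\Xi}[V] \le \cT^{\DS,\Xi}[V'] + \gamma \delta$, which is the claimed $\gamma$-contraction in $\|\cdot\|_\infty$. For part (iii), the boundedness hypothesis ensures that $\cT^{\DS,\Xi}$ maps $\bbR^{|\cS|}$ into itself, so by (ii) and Banach's fixed-point theorem it has a unique fixed point $\tilde V$. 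The remaining work is to identify $\tilde V$ with $V^{\DS,\Xi}$ defined in \eqref{eq:DS-defineV}. My plan is a principle-of-optimality argument: isolate the $t=0$ term, use the state-wise decomposability $\Xi = \otimes_s \Xi_s$ to split the sup over $\bD$ and $\bPsi$ into a sup over $(d^0_s,\psi^0_s)$ plus an independent sup over future choices (which recursively produces $V^{\DS,\Xi}(s_1)$ inside the expectation), and then observe that for each realization $\beps_s$ the pointwise maximizing $\psi^0_s$ is $\mathrm{argmax}_a\{w_{sa} + \epsilon(a|s)\}$, which reassembles $\cT^{\DS,\Xi}$ on the outside. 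The main obstacle here is the exchange of the infinite sup with the limit in the discounted sum; I would handle it by truncating at horizon $H$, bounding the uncontrolled tail uniformly by $\gamma^H C/(1-\gamma)$ using the boundedness hypothesis, and letting $H \to \infty$.

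For part (iv), combine the maximizer $d_s^* \in \Xi_s$ of \eqref{eq:DS-expected-phi} with the deterministic tie-breaking rule $\psi^*(s|\beps_s) = \mathrm{argmax}_a\{w_{sa} + \epsilon(a|s)\}$, which attains the inner max for each realization. Since $d_s^*$ is a continuous distribution, ties occur with probability zero, hence the argmax is almost surely unique and the induced density is $\pi^{\DS,*}(a|s) = \bbP_{\beps_s \sim d_s^*}[\mathrm{argmax}_{a'}\{w_{sa'} + \epsilon(a'|s)\} = a]$. To match this to $\partial \varphi_s(\bw_s)/\partial w_{sa}$, I would invoke an envelope (Danskin-type) argument: the dependence of $d_s^*$ on $\bw_s$ contributes a vanishing first-order term because $d_s^*$ is optimal, and $\partial_{w_{sa}} \max_{a'}\{w_{sa'} + \epsilon(a'|s)\}$ equals the indicator that $a$ is the (a.s.\ unique) argmax, so interchanging $\partial_{w_{sa}}$ with $\bbE_{\beps_s \sim d_s^*}$ delivers exactly the claimed probability. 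The subtle step is justifying that interchange, which I would discharge by dominated convergence using the uniform boundedness assumption already imposed in (iii).
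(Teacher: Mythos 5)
Your proposal is correct in substance, and for parts (i) and (iii) it follows essentially the same path as the paper (direct verification of monotonicity/translation, then a principle-of-optimality argument with horizon truncation and a $\gamma^{n}C/(1-\gamma)$ tail bound). The interesting divergences are in (ii) and (iv). For (ii), the paper does \emph{not} derive the contraction from (i): it picks a $\xi_1$-optimal distribution $\overline{d}^1_s$, proves the pointwise bound $|\max_a z^1_s(a|\beps_s) - \max_a z^2_s(a|\beps_s)| \leq \gamma\|V^1-V^2\|_\infty$ for each realization of $\beps_s$, integrates, and lets $\xi_1\to 0$. Your sandwich argument via monotonicity and translation invariance is cleaner, but note that it only works because you (correctly) use the shift coefficient $\gamma$, i.e.\ $\cT^{\DS,\Xi}[V+\alpha\be]=\cT^{\DS,\Xi}[V]+\gamma\alpha$; the theorem statement and the paper's own computation in (i) write the shift as $+\alpha$ without the discount factor (they drop the $\gamma$ coming from $\gamma\sum_{s'}q(s'|s,a)\alpha$), and with that literal version your argument would only give non-expansiveness. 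So your route buys a shorter proof of (ii) and, as a by-product, flags a small error in the paper's statement of (i). For (iv), the paper simply cites \cite{Mcfadden1981} (together with the persistency results of \cite{Natarajan2009persistency}) for the identity $\pi^{\DS,*}(a|s)=\partial\varphi_s(\bw_s)/\partial w_{sa}$, whereas you sketch the underlying Danskin/envelope derivation from scratch. That sketch is the standard proof of the cited result; the one point you should not wave away is the differentiability of $\varphi_s$ itself — as a supremum of convex functions it is convex, hence only differentiable where the "selection" is unambiguous, so you need that every optimizing $d_s^*$ (or limit of optimizing sequences when $\Xi_s$ is not compact) induces the same almost-surely-unique argmax probabilities. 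The paper quietly absorbs this into the citation; your version would need to state it as a hypothesis or argue it from the continuity assumption on the distributions in $\Xi_s$.
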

Theorem \ref{th:DS-contractionmapping-optimal-policy}-(i) is quite obvious to validate. The contraction property (ii) can be done by  looking at the structure of the mapping \eqref{eq:DS-mapping} and (iii) can be validated by extending \eqref{eq:DS-mapping} recursively. The assumption in (iii) generally holds if the noise $\beps_s$ has a finite first moment for any distribution $d_s\in\Xi$ for any $s\in\cS$, i.e., $\sup_{d_s \in\Xi}\bbE_{\beps_s\sim d_s}[\beps_s]<\infty$. Given (ii)  and (iii), one can find an optimal decision rule by solving the Bellman equation \eqref{eq:DS-mapping}. Hence, we can make use of the properties of the semi semi-parametric choice model  propose by \cite{Natarajan2009persistency} to obtain the optimal decision rule (iv).
Note that in the S-MDP framework, the optimal policy is defined similarly, but for only one distribution $d_s$. Let $d_s(\bw_s)$ denote the distribution or a limit of a sequence of distributions (in case that the set $\Xi_s$ is not compact) that attains the optimal value $\varphi_s(\bw_s)$, the optimal policy  can be expressed as
$
\pi^{\DS,*} (a|s) = \bbP_{\beps_s\sim d_s(\bw_s)}\left[a = \text{argmax}_{a\in\cA}\{w_{sa}+\epsilon(a|s)\} \right].
$
It is also interesting to look at a relevant version of the DS-MDP, where we take an \textit{inf} over the distribution set $\Xi$, instead of \textit{sup} as in \eqref{eq:DS-infinite-horizon-form}, i.e., $ \inf_{\bd^0,\bd^1,\ldots } \sup_{\bpsi^0,\bpsi^1,...}\{\cdot\}$. In this case, an optimal decision rule  is more difficult to identify, even for one-stage problems, thus less interesting \citep{Natarajan2009persistency}.   


\section{Nested Relation between Stochastic and Regularized MDP Frameworks}
\label{sec:nested-relation}
In this section we analyze the relation between the stochastic and regularized MDP frameworks mentioned above. First,  we summarize all the MDP frameworks considered in Tab. \ref{tab:MDPs}. The constrained MDP one on the last row will be discussed in the next section.   

\begin{minipage}{\textwidth}
  \begin{minipage}[b]{0.49\textwidth}
    \centering
   \begin{tabular}{l|l}
MDP frameworks                  & Abbreviation  \\ 
\hline
Standard MDP                    & MDP          \\
Stochastic EV  MDP        & EV-MDP       \\
Stochastic  MDP           & S-MDP        \\
Distributional Stochastic MDP & DS-MDP       \\
Regularized MDP                 & R-MDP        \\
Entropy Regularized MDP         & ER-MDP       \\
Constrained MDP                 & CT-MDP      
\end{tabular}
      \captionof{table}{Summary of MDP frameworks \label{tab:MDPs} }
    \end{minipage}
  \hfill        
  \begin{minipage}[b]{0.49\textwidth}
    \centering
    \includegraphics[width=0.6\textwidth]{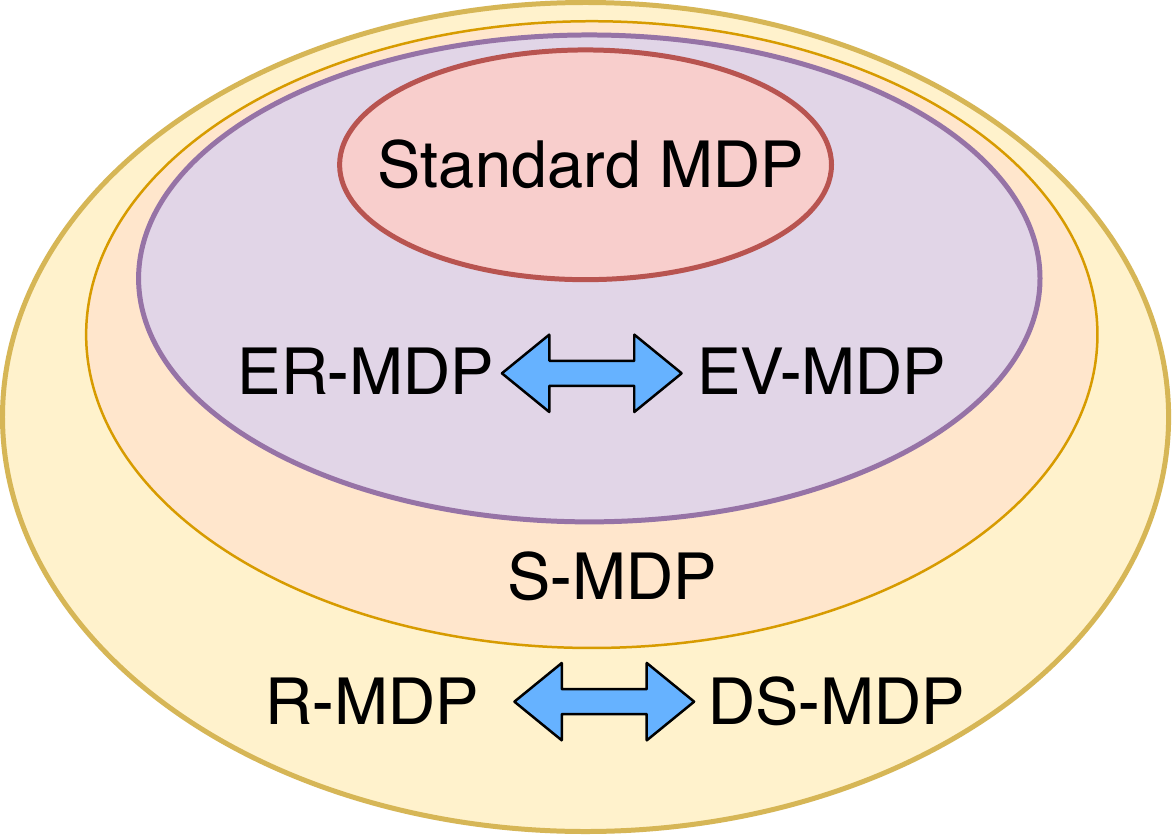}
    \captionof{figure}{Nested relation between MDP frameworks  \label{fig:relation}}
  \end{minipage}
  \end{minipage}

In general, we will validate the  diagram in Fig. \ref{fig:relation}, which says that
 (i) the R-MDP are equivalent to DS-MDPs and strictly covers other frameworks, 
(ii) the ER-MDP is equivalent to the S-MDP with Gumbel distributed random noises  and both are strictly covered by the S-MDP.
This diagram shows a broad and insightful connection/relation between  regularized and stochastic MDP frameworks, thus provides new ways to interpret each MDP framework through others. To facilitate our exposition, we need the following definitions. 

\begin{definition}[Equivalent MDP models]
\label{def:equip-MDP}
Two MDP models $\textbf{X}$ and $\textbf{Y}$  are said to be ``\textit{equivalent}'', denoted by $X\equiv Y$, if and only if 
they are specified by the same tuple  $(\cS,\cA,\bq, \gamma)$ and there are a constant vector $\bap$ of size ($|\cS|\times|\cA|$) such that for any reward functions $\br^{X},\br^{Y}\in \bbR^{|\cS|\times|\cA|}$, $\br^X- \br^Y = \bap$, model $\bX$ with rewards $\br^X$ and model $\bY$ with rewards $\br^Y$ always yield the same the value function  and  a common stationary optimal policy $\{\pi^*(a|s),\ \forall a,s\}$.
\end{definition}
The equivalence property between two MDP models  implies that the two models are identical to use, in both perspectives of RL and IRL. Under this definition,  two ER-MDP models based on a relative entropy regularizer $\eta_1 \sum_{a}\pi(a|s)\ln(\pi(a|s))$ and a KL divergence $\eta_2\KL(\bpii_s||\overline{\bpii}_s)$  are equivalent if $\eta_1=\eta_2$, but not equivalent if $\eta_1 \neq \eta_2$.  

We can view a MDP framework as a set of MDP models, e.g., the S-MDP framework contains S-MDP models, each is specified by a distribution set $\{d_s,\ s\in\cS\}$ of the random noises $\{\beps_s,\ s\in\cS\}$.
We define ``equivalent'' MDP frameworks as follows. 
\begin{definition}[Equivalent MDP frameworks]
\label{def:equip-MDP-FR}
Two MDP frameworks $\cX$ and $\cY$  are said to be ``\textit{equivalent}'' if and only if for any MDP model $\bX\in\cX$, there is a MDP model $\bY \in \cY$ such that $\bX\equiv \bY$, and vice-versa. 
\end{definition}
In other words, the ``\textit{equivalence}'' between two MDP frameworks implies that they share the same set of equivalent MDP models, and the uses of these MDP frameworks are indeed identical. We also define when a framework is strictly subsumed by another one.
\begin{definition}[Subsumed MDP framework]
Given two MDP frameworks $\cX$ and $\cY$, we say ``$\cX$ strictly subsumes $\cY$'' if  and only if for any MDP model $\bY\in\cY$ there is always $\bX\in\cX$ such that $\bX \equiv \bY$, but there is $\bX \in \cX$ such that there is no $\bY \in \cY$ such that $\bX$ and $\bY$ are equivalent.  
\end{definition}
So if a MDP framework $\cX$ subsumes another $\cY$, it means $\cX$ is more flexible to use. It is clear that the R-MDP subsumes the EV-MDP and standard MDP, and the DS-MDP subsumes the S-MDP and EV-MDP ones (but not sure the subsumption is strict). The question now is what is the relation between two modeling approaches: \textit{stochastic} and \textit{regularized}. We start investigating this by the following proposition.
\begin{proposition}
\label{th:Ev-ER-MDPs}
The ER-MDP and EV-MDP frameworks are equivalent and strictly subsume the standard unconstrained MDP one. 
\end{proposition}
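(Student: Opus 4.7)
The plan is to reduce both frameworks' Bellman operators to a common closed form and then match their fixed-point value functions and optimal policies. The core rests on two classical identities. First, for the EV-MDP the log-sum-exp formula for i.i.d.\ standard-Gumbel noise gives, for any $\bw_s\in\bbR^{|\cA|}$,
\begin{equation*}
\bbE_{\beps_s}\!\left[\max_{a\in\cA}\{w_{sa}+\epsilon(a|s)\}\right] \;=\; c + \ln\sum_{a\in\cA} e^{w_{sa}},
\end{equation*}
where $c$ is Euler's constant, together with the classical McFadden choice probability $\bbP(a=\arg\max_{a'}\{w_{sa'}+\epsilon(a'|s)\})=e^{w_{sa}}/\sum_{a'}e^{w_{sa'}}$. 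Second, for the ER-MDP with $\phi_s(\bpii_s)=-\sum_a\pi(a|s)\ln\pi(a|s)$, solving \eqref{eq:R-bellman-eq1} by a Lagrangian on the simplex gives $\phi_s^*(\bw_s)=\ln\sum_a e^{w_{sa}}$ with the softmax maximizer $\pi^*(a|s)=e^{w_{sa}}/\sum_{a'}e^{w_{sa'}}$. These two computations are the only nontrivial ingredients.

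With both closed forms in hand, the equivalence becomes essentially algebraic. The EV-MDP Bellman operator at any $(V,s)$ equals the ER-MDP Bellman operator at $(V,s)$ plus the constant $c$, independently of the chosen reward function. Selecting the shift $\bap\in\bbR^{|\cS|\times|\cA|}$ to be the uniform vector with every entry equal to $c$, and pairing the EV-MDP on rewards $\br^{\text{EV}}=\br^{\text{ER}}-\bap$ with the ER-MDP on rewards $\br^{\text{ER}}$, makes the two Bellman recursions identical at every iterate. Both operators are $\gamma$-contractions, so their unique fixed points coincide: $V^{\text{EV}}=V^{\text{ER}}$. Consequently the per-state action-value vectors $\bw_s$ agree on both sides, and the two optimal policies---softmax in $\bw_s$ by either the McFadden formula or the Fenchel maximizer---are identical. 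Since $\bap$ does not depend on the rewards, Definition~\ref{def:equip-MDP} is satisfied and Definition~\ref{def:equip-MDP-FR} then delivers the framework-level equivalence ER-MDP $\equiv$ EV-MDP.

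For the strict subsumption of the standard unconstrained MDP, the easy direction is to exhibit an ER-MDP (equivalently, an EV-MDP) with no equivalent standard MDP: the softmax optimal policy is strictly positive on every action, whereas any standard-MDP optimal policy on generic rewards is deterministic, so a one-state two-action instance with distinct rewards rules out equivalence for any fixed shift $\bap$. For the opposite direction, I would capture every standard MDP as a vanishing-regularization limit by scaling the reward vector by $\lambda\to\infty$, which drives the ER-MDP softmax to the $\arg\max$ and aligns the action-value ordering with that of the standard MDP, so every standard-MDP optimal policy is recovered in this limit.

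The main obstacle is precisely this last step: with the entropy coefficient implicitly fixed at one, the ER-MDP cannot literally produce a deterministic optimal policy at finite rewards, so the ``standard MDP inside ER-MDP'' direction requires either the limiting argument above or a reinterpretation allowing the regularizer scale to vary. All other pieces---the log-sum-exp identity, the Fenchel conjugate of entropy, contraction of the Bellman operator, and the softmax/McFadden match---are classical and should go through without difficulty.
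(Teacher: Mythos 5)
Your treatment of the equivalence itself matches the paper's proof: it likewise rests on the softmax/log-sum-exp closed forms on both sides (citing Ziebart/Haarnoja for the ER Bellman update and Rust for the Gumbel expectation and McFadden choice probabilities) and concludes that the two contraction mappings share a fixed point and produce identical choice probabilities. Your explicit bookkeeping of the Euler constant through the reward shift $\bap$ is in fact \emph{more} careful than the paper, which simply asserts that the value functions and policies are ``exactly equal''; observing that a uniform shift leaves the softmax invariant and makes the two Bellman recursions literally coincide is the right way to reconcile that assertion with Definition~\ref{def:equip-MDP}. The strictness counterexample (interior softmax versus generically deterministic argmax) is also exactly the paper's argument.

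The one genuine gap is the one you flagged yourself: showing that every standard MDP model has an equivalent ER-MDP model. Your proposed $\lambda\to\infty$ rescaling cannot close it, because Definition~\ref{def:equip-MDP} demands exact equality of value functions and optimal policies for \emph{every} reward function related by a fixed shift $\bap$, and a limit of ER-MDP models is not itself an ER-MDP model; at any finite scale the softmax policy is interior while the standard optimal policy is generically deterministic, so no single rescaled model can work. The paper avoids this entirely by treating the ER-MDP (and EV-MDP) framework as the family of models with regularizer $-\eta\sum_{a}\pi(a|s)\ln\pi(a|s)$ (respectively noise $\eta\,\epsilon(a|s)$) over all $\eta\geq 0$: the $\eta=0$ member \emph{is} the standard MDP, which gives the subsumption for free, and any $\eta>0$ member supplies the strictness. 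So adopt your own suggested ``reinterpretation'' --- let the regularizer scale $\eta$ range over $[0,\infty)$ as part of the framework --- and drop the limiting argument; everything else in your proof goes through.
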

The equivalence between ER-MDP and EV-MDP  in the finite-horizon case has been shown in \cite{Ermon2015learning} and can be validated easily by looking at the formulations used to compute optimal policies in previous studies  \citep{ziebart2010_IRL_Causal, Rust1988MLE}. The claim that ER-MDP and EV-MDP strictly subsumes the standard one is also easy to verify, using the fact the standard MDP typically yields deterministic policies while the other frameworks give randomized and soft optimal policies. 

There is a well-known similarity between Shannon's entropy \cite{Shannon1948mathematical} and the multinomial logit model \citep{Anas1983discrete}. 
Proposition \ref{th:Ev-ER-MDPs} extends this similarity to a multi-stage decision-making setting. It also indicates a  connection between  EV-MDPs and the maximum causal entropy principle \citep{ziebart2010_IRL_Causal}, which implies that an optimal decision rule given by an EV-MDP model will maximize a causal entropy function and satisfy a prediction log-loss guarantee.   

An application of the ER-MDP framework is to add a KL divergence  $\phi_s(\bpii_s) = -\eta\KL(\bpii_s||\overline{\bpii}_s) = \eta\sum_{a}\pi(a|s)\ln \frac{\pi(a|s)}{\overline{\pi}(a|s)}$  to control the divergence between a desired optimal policy and a given policy
\cite{Haarnoja2017RL_ER,Abdolmaleki2018maximum}. According to Theorem \ref{th:Ev-ER-MDPs}, this ER-MDP model is \textit{equivalent} to a stochastic MDP model with rewards $r'(a|s) = r(a|s) -\eta \overline{\pi}(a|s)$ and Gumbel distributed i.i.d random noises. In other  words, by adding stochastic noises to the reward function, we can penalize the divergence between two policies.   

We now move to the S-MDP framework and have Proposition \ref{prop:S-MDP-subsume-EV-MDP}. It is obvious to see that the EV-MDP is just special case of the S-MDP when the random noises follow the Gumbel distribution. Thus S-MDP subsumes both the EV-MDP and ER-MDP. However, it is not straightforward to see that the S-MDP \textit{strictly} subsumes the other frameworks, as one can show that an ER- or EV-MDP model can yield any optimal optimal policy in $\Delta^\pi$ when varying the reward function. 
We do this by giving a counter example where the random noises in the S-MDP model are no-longer i.i.d.  
\begin{proposition}
\label{prop:S-MDP-subsume-EV-MDP}
The S-MDP strictly subsumes the EV-MDP and ER-MDP frameworks.
\end{proposition}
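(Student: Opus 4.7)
The plan splits into subsumption and strictness. For subsumption, any EV-MDP model is literally an S-MDP model with each state-wise noise vector $\beps_s$ taken i.i.d.\ Gumbel, so S-MDP trivially subsumes EV-MDP, and by Proposition~\ref{th:Ev-ER-MDPs} it also subsumes ER-MDP. Hence the substance of the claim is in producing a single S-MDP model that is not equivalent, in the sense of Definition~\ref{def:equip-MDP}, to any EV-MDP model; the ER-MDP case then follows again by Proposition~\ref{th:Ev-ER-MDPs} and transitivity of $\equiv$.

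For the counter-example I would collapse the dynamics by choosing $|\cS|=1$, $|\cA|=3$, and $\gamma=0$, so that the Bellman equation at the unique state reduces to a one-shot discrete-choice problem. Letting the noise $\beps_s$ be i.i.d.\ standard normal, the S-MDP model becomes multinomial probit and, by Theorem~\ref{th:DS-contractionmapping-optimal-policy}(iv) specialized to a singleton distribution set, its optimal policy is
\[
\pi^{\SR,*}(a|s)\;=\;\bbP_{\beps_s\sim\cN(0,I)}\!\left[\,a=\arg\max_{a'\in\cA}\{r(a'|s)+\epsilon(a'|s)\}\,\right].
\]
Any candidate equivalent EV-MDP with Gumbel scale $\mu>0$ and constant reward shift $\bap$ yields, at the same state, the logit policy
\[
\pi^{\EV,*}(a|s)\;=\;\frac{\exp\bigl((r(a|s)+\alpha_{sa})/\mu\bigr)}{\sum_{a'\in\cA}\exp\bigl((r(a'|s)+\alpha_{sa'})/\mu\bigr)}.
\]

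The crux of the strictness argument is an Independence-of-Irrelevant-Alternatives (IIA) obstruction. For every fixed $(\mu,\bap)$, the logit ratio $\pi^{\EV,*}(a_1|s)/\pi^{\EV,*}(a_2|s)=\exp\bigl((r(a_1|s)-r(a_2|s)+\alpha_{sa_1}-\alpha_{sa_2})/\mu\bigr)$ is independent of $r(a_3|s)$. By contrast, the probit ratio $\pi^{\SR,*}(a_1|s)/\pi^{\SR,*}(a_2|s)$ varies strictly monotonically with $r(a_3|s)$, because the argmax couples the three alternatives and shifting $r(a_3|s)$ drains mass from $a_1$ and $a_2$ in different proportions. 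Hence for any fixed $(\mu,\bap)$ one can pick two reward vectors $(r_1,r_2,r_3)$ and $(r_1,r_2,r'_3)$ at which the probit ratios disagree while the logit ratios must coincide, violating Definition~\ref{def:equip-MDP}.

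The main obstacle is turning the nonconstancy of the probit ratio in $r_3$ into a rigorous inequality. I would finish it by a direct two-point evaluation, e.g.\ comparing $r_3\to-\infty$ (where $a_3$ is effectively removed and the ratio reduces to a binary-probit value depending only on $r_1-r_2$) against $r_3=r_1=r_2$ (where all three alternatives compete and the ratio changes), or by invoking the classical IIA failure of multinomial probit. Either route closes the counter-example and, combined with the subsumption observation in paragraph one, establishes strict subsumption of both EV-MDP and ER-MDP by S-MDP.
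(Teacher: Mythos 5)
Your proof is correct, but it reaches strictness by a genuinely different counter-example than the paper's. The paper also reduces to a one-shot choice problem (via a three-state, two-action MDP with absorbing successor states and zero continuation rewards), but its separating S-MDP model uses \emph{non-i.i.d., partially degenerate} noise: $\epsilon(a|s)=0$ everywhere except $\epsilon(a_1|s_0)\sim\text{Uniform}[0,1]$. The obstruction there is that the resulting ratio $\pi(a_1|s_0)/\pi(a_2|s_0)$ is a rational function of the reward gap that attains the values $0$ and $\infty$ at finite rewards, which no logit ratio $\exp((r(a_1|s_0)-r(a_2|s_0)+\beta_{s_0})/\eta)$ can match. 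Your route instead keeps the noise i.i.d.\ (standard normal, $|\cA|=3$, $\gamma=0$) and exploits the IIA failure of multinomial probit: the logit ratio $\pi(a_1|s)/\pi(a_2|s)$ is invariant to $r(a_3|s)$ for every scale $\mu$ and shift $\bap$, while the probit ratio is not. Each approach buys something: the paper's is fully explicit (closed-form uniform probabilities) and works with only two actions, but leans on degenerate noise components that sit awkwardly with the continuity assumption stated in the preliminaries; yours shows the separation is driven by the distributional family itself even under i.i.d.\ continuous noise, at the cost of needing $|\cA|\geq 3$ and a non-closed-form IIA argument. One small repair is needed in your final step: your second evaluation point ``$r_3=r_1=r_2$'' forces $r_1=r_2$, under which the probit ratio equals $1$ for \emph{every} $r_3$ by symmetry, so the comparison collapses. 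Fix $r_1\neq r_2$ instead and vary only $r_3$: the probit ratio tends to the finite binary-probit value $\Phi((r_1-r_2)/\sqrt{2})/\Phi((r_2-r_1)/\sqrt{2})$ as $r_3\to-\infty$ and diverges as $r_3\to+\infty$ (Gaussian tails give $P(a_1)/P(a_2)\asymp\exp(r_3(r_1-r_2)/2+O(1))$), so by continuity two finite values of $r_3$ yield distinct probit ratios while the logit ratio is unchanged, which contradicts Definition~\ref{def:equip-MDP}.
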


We now look at broader frameworks. The R-MDP is the most general frameworks among regularized ones,  and the DS-MDP covers all other stochastic MDP frameworks. Interestingly, Theorem \ref{th:R-and-DS-MDP} below tells use that the two general frameworks are actually equivalent. The proof makes use of  Theorem \ref{th:DS-contractionmapping-optimal-policy}  and some results  in \cite{Feng2017relationDC}. The equivalence  provides  a new way to view regularized MDPs through the lens of stochastic MDPs. 
That is, adding (concave) regularizers to the MDP model is equivalent to assuming that the reward function contains some random noises of ambiguous distribution. This also shows the robustness of the R-MDP framework from a stochastic-reward viewpoint.

\begin{theorem}
\label{th:R-and-DS-MDP}
The R-MDP and DS-MDP frameworks are equivalent and strictly subsume the S-MDP framework if $|\cA|\geq 3$.
\end{theorem}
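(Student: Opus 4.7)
The plan is to reduce the framework-level equivalence $\text{R-MDP}\equiv\text{DS-MDP}$ to a pointwise equivalence between the two per-state convex surplus functions that drive the respective Bellman recursions: $\phi^*_s(\bw_s)$ from \eqref{eq:R-bellman-eq1} for the R-MDP, and $\varphi_s(\bw_s)$ from \eqref{eq:DS-expected-phi} for the DS-MDP. By Theorem \ref{th:DS-contractionmapping-optimal-policy} together with the R-MDP Bellman contraction of \cite{Geist2019theory}, two models sharing $(\cS,\cA,\bq,\gamma)$ and rewards differing by a constant shift $\bap$ (as permitted by Definition \ref{def:equip-MDP}) yield identical value functions and a common stationary optimal policy precisely when $\phi^*_s$ and $\varphi_s$ coincide up to an additive constant for every $s\in\cS$. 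So the equivalence reduces to showing that the class $\{\varphi_s\ :\ \Xi_s\text{ a nonempty distribution set}\}$ coincides with the class $\{\phi^*_s\ :\ \phi_s\text{ concave on }\Delta^\pi\}$.

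For the direction DS $\Rightarrow$ R, I would observe that $\varphi_s$ is convex in $\bw_s$ (as a supremum of expectations of pointwise maxima of affine functions), finite under the moment assumption of Theorem \ref{th:DS-contractionmapping-optimal-policy}(iii), translation-equivariant, and has its subgradient supported on the simplex by part (iv) of that theorem. Then $\phi_s(\bpii_s):=-\varphi^*_s(\bpii_s)$ restricted to the simplex is concave, and Fenchel-Moreau duality recovers $\phi^*_s=\varphi_s$. For the reverse direction R $\Rightarrow$ DS, given any concave $\phi_s$, the conjugate $\phi^*_s$ is convex, translation-equivariant, and its subgradient lies in the simplex. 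I would invoke the semi-parametric additive-utility representation of \cite{Natarajan2009persistency}, together with the discrete-choice results of \cite{Feng2017relationDC}, to construct a marginal-based ambiguity set $\Xi_s$ whose induced $\varphi_s$ equals $\phi^*_s$; informally, every convex function with simplex-valued subgradient is representable as the worst-case surplus of some distributionally robust additive random utility model.

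For the strict subsumption of the S-MDP when $|\cA|\geq 3$, I note that every S-MDP is trivially a DS-MDP with singleton $\Xi_s$, so it suffices to exhibit an R-MDP (equivalently, via the equivalence just shown, a DS-MDP) whose induced $\phi^*_s$ cannot be written as $\bbE_{\beps_s\sim d}[\max_a\{w_{sa}+\epsilon(a|s)\}]$ for any single distribution $d$. The surplus of a single additive random utility model is constrained by the Daly-Williams-Zachary conditions, i.e.\ sign restrictions on mixed partial derivatives / principal minors of the Hessian of the surplus, and these conditions become nontrivial exactly when $|\cA|\geq 3$; with $|\cA|=2$ every convex, translation-invariant, simplex-gradient surplus is RUM-rationalizable and no separation is possible. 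I would construct a concrete three-action regularizer, for instance a carefully scaled Tsallis or piecewise-quadratic $\phi_s$, and verify that a high-order mixed partial of $\phi^*_s$ violates the required alternating-sign condition. The hardest step will be this last one: writing down a regularizer whose conjugate can be computed explicitly enough to fail the rationalizability test, since once such a counterexample is in hand the equivalence from the first two paragraphs automatically transports it to the DS-MDP side.
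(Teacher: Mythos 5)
Your proposal is correct and follows essentially the same route as the paper: reduce the framework equivalence to the per-state one-shot identity between the regularized surplus $\phi^*_s$ and the worst-case expected maximum $\varphi_s$, establish it via convex conjugacy and the correspondence between concave regularizers and distributionally robust random-utility representations in \cite{Feng2017relationDC}, and then obtain strict subsumption of S-MDP from the failure of single-distribution rationalizability when $|\cA|\geq 3$. The only substantive difference is that the step you flag as hardest --- exhibiting a convex surplus whose gradient map cannot be generated by any single noise distribution for $|\cA|\geq 3$ --- is exactly the result the paper discharges by citing \cite{Feng2017relationDC} rather than constructing an explicit counterexample, so no new Daly--Williams--Zachary computation is actually required.
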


We also see that, given an ambiguity set of continuous distributions $\Xi$, the DS-MDP model is equivalent to a R-MDP model with regularizers $\phi_s(\bpii_s) = -  \varphi^*_s(\bpii_s)$, where $\varphi^*_s(\bpii_s)$ is the convex conjugate of function $\varphi_s(\bw_s)$ defined in \eqref{eq:DS-expected-phi}, i.e., $\phi_s(\bpii_s) = -\varphi^*_s(\bpii_s) = -\sup_{\bw_s \in \bbR^{|\cA|}} \{\bpii^\T_s \bw_s - \varphi_s(\bw_s) \}$. Under some ambiguity settings with marginal information, one can obtain a closed form to compute the convex conjugate functions $\varphi^*_s(\bpii_s)$ and we provide some examples in Proposition \ref{prop:ex-DS-R-MDP} below, 
which may suggest new regularizers 
with interpretations from a stochastic-rewards viewpoint. 
Proposition \ref{prop:ex-DS-R-MDP} below provides some examples of DS-MDP models and their equivalent R-MDP ones. These examples extend some results developed for the   persistent model \cite{Natarajan2009persistency} to our MDP settings.  

\begin{proposition}
\label{prop:ex-DS-R-MDP}
The DS-MDP models on the left column are equivalent to the corresponding R-MDP models on the right column. 
\begin{table}[htb]\small
\centering
\begin{tabular}{l|l|l}
Case&DS-MDP, for any $s\in\cS$                & R-MDP, for any $s\in\cS$  \\ 
\hline
(i)&$\Xi_s = \Bigg\{d_s\Big| \epsilon(a|s)\sim F_{sa}(\cdot), \forall a\in\cA\Bigg\}$             & $\phi_s (\bpii_s) = \sum_{a\in\cA} \int_{1-\pi(a|s)}^1 F_{sa}^{-1}(t) dt$         \\\hline
(ii)&$\Xi_s = \left\{d_s
  \middle\vert \begin{array}{@{}ll@{}} \text{dev}(\epsilon(a|s))  = \sigma(a|s) \\ \bbE[\epsilon(a|s)]=0\end{array} \right\}$      & $\phi_s (\bpii_s) = \sum_{a\in\cA} \sigma(a|s)\sqrt{\pi(a|s)(1-\pi(a|s))}$       \\\hline
(iii)&$\Xi_s = \left\{d_s
  \middle\vert \begin{array}{@{}ll@{}} \text{Cov}(\beps_s)  = \Sigma_s \\ \bbE[\beps_s]=0\end{array} \right\}$               & $\phi_s (\bpii_s) = \text{trace}\Big( \Sigma_s^{1/2}\big(\text{Diag}(\bpii_s) - \bpii_s\bpii_s^\T \big)\Sigma_s^{1/2} \Big)^{1/2}$        \\\hline
\end{tabular}
\end{table}
where $F_{as}(\cdot)$ is the marginal  (continuous) cumulative distribution function of $\epsilon(a|s)$, $\text{dev}(\epsilon(a|s))$ is the marginal standard deviation of $\epsilon(a|s)$, $\text{trace}(\bA)$ is the trace of matrix $\bA$, $\text{Diag(\bv)}$ is the  diagonal matrix with the elements of vector $\bv$ on the main diagonal, and 
$\text{Cov}(\beps_s)$ is the covariance matrix of the random vector $\beps_s$. Here we assume that $\Sigma_s$ is positive definite.
\end{proposition}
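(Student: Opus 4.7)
My plan is to leverage Theorem~\ref{th:R-and-DS-MDP}, which already establishes that the DS-MDP with ambiguity set $\Xi_s$ is equivalent to an R-MDP with regularizer $\phi_s(\bpii_s) = -\varphi^*_s(\bpii_s)$, where $\varphi_s$ is the worst-case expected maximum defined in \eqref{eq:DS-expected-phi} and $\varphi^*_s$ denotes its convex conjugate. Hence proving Proposition~\ref{prop:ex-DS-R-MDP} reduces to three conjugate-function computations: for each ambiguity set $\Xi_s$ in the table, compute $\varphi_s(\bw_s)$ in closed form, then take the Legendre--Fenchel transform restricted to the simplex $\Delta^\pi_s$ and verify that $-\varphi^*_s(\bpii_s)$ matches the claimed $\phi_s(\bpii_s)$.

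For each case I would translate the one-shot distributionally robust expected maximum into an optimization over marginal choice probabilities, which is precisely the setting analyzed in the persistent choice model of \cite{Natarajan2009persistency}. For case (i), where only marginal CDFs $F_{sa}$ are given, the persistency result yields $\varphi_s(\bw_s) = \max_{\bpii_s\in\Delta}\{\bpii_s^\T \bw_s + \sum_a \int_{1-\pi(a|s)}^1 F_{sa}^{-1}(t)\,dt\}$; reading off the conjugate directly gives the claimed $\phi_s$. For case (ii), with marginal mean zero and standard deviation $\sigma(a|s)$, the univariate Cauchy--Schwarz/Chebyshev bound (tight, achieved by a two-point distribution) gives the per-action term $\sigma(a|s)\sqrt{\pi(a|s)(1-\pi(a|s))}$, which again appears inside the supremum over $\bpii_s$ and transfers to the conjugate. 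For case (iii), with joint zero-mean and covariance $\Sigma_s\succ 0$, I would invoke the semidefinite-programming characterization of Natarajan et al., which expresses $\varphi_s(\bw_s)$ as $\max_{\bpii_s\in\Delta}\{\bpii_s^\T \bw_s + \mathrm{trace}(\Sigma_s^{1/2}(\mathrm{Diag}(\bpii_s) - \bpii_s\bpii_s^\T)\Sigma_s^{1/2})^{1/2}\}$; the conjugate with respect to $\bw_s$ is immediate from the envelope representation.

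In each case, after identifying $\varphi_s$ as a supremum of linear functions in $\bw_s$ (via the envelope over $\bpii_s\in\Delta$ and a concave perturbation term), the conjugate is recovered simply by swapping the supremum: $\varphi^*_s(\bpii_s) = -[\,\text{perturbation term evaluated at }\bpii_s\,]$ whenever $\bpii_s$ lies in the simplex, and $+\infty$ otherwise. This is a standard consequence of Fenchel duality for sup-type value functions, and gives exactly $\phi_s(\bpii_s)$ as tabulated. Finally, one should verify that each $\phi_s$ so obtained is concave on $\Delta^\pi_s$ (required by the R-MDP setup): this is immediate in (i) from concavity of $\int_{1-x}^1 F^{-1}(t)\,dt$, in (ii) from concavity of $\sqrt{x(1-x)}$, and in (iii) from the fact that $(\mathrm{Diag}(\bpii_s) - \bpii_s\bpii_s^\T)$ is the covariance of a categorical variable and the trace square-root is concave.

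The main obstacle is case (iii): justifying the SDP-based closed form for $\varphi_s$ requires invoking the full strong-duality argument of \cite{Natarajan2009persistency}, ensuring the supremum over the ambiguity set is attained (or well-approximated) and that the positive definiteness of $\Sigma_s$ is sufficient for the reformulation. The other two cases are essentially corollaries of classical univariate distributionally robust bounds, and the translation from one-shot persistency to the DS-MDP Bellman update is handled uniformly by Theorem~\ref{th:R-and-DS-MDP}.
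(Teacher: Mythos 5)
Your proposal is correct and follows essentially the same route as the paper, which (in the paragraph preceding the proposition and the remarks after it) reduces the claim to computing $\phi_s = -\varphi_s^*$ via Theorem~\ref{th:R-and-DS-MDP} and then imports the closed forms for $\varphi_s$ from the persistency/marginal-moment results of \cite{Natarajan2009persistency} for cases (i)--(ii) and the covariance (cross-moment) characterization of \cite{mishra2014theoretical} for case (iii). The paper gives no further detail beyond these citations, so your fleshed-out conjugate computations and the concavity checks are a faithful (indeed more explicit) version of the intended argument; the only cosmetic difference is that you attribute the case-(iii) trace formula to Natarajan et al.\ rather than to the later reference the paper uses.
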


For (i) and (ii), the models are based on the the marginal distribution  and marginal  moment models considered  \cite{Natarajan2009persistency}, in which only the cumulative distribution, or the first and moments of the marginal distributions of the random noises are known. 
Example (iii) concerns models in which only the covariance information of the distribution in $\Xi$ is known and the equivalent R-MDP model is derived using   \cite{mishra2014theoretical}.

%

\section{Connection to Constrained MDPs}
\label{sec:CT-MDP}
We investigate a relation between the  MDP frameworks considered above and the constrained MDP (CT-MDP) one, an extension of the standard MDP where constraints are imposed on the policy. A typically CT-MDP model has constraints  on a long-term performance of the policy \citep{Altman1999constrained}.
In this section, we only consider constraints on state policies, i.e., the policy variables at each state belong to a feasible set, and these sets are independent over states.
This allows to solve the constrained MDP problem by value iteration or policy iteration as in the case of standard MDPs. 

\subsection{Constrained MDPs} The constrained version requires that the policy $\bpii$ belong to a set 
$
\bpii \in \cD^{\pi} \subset \Delta^{\pi}. 
$
We further assume that $\cD^\pi$ is state-wise decomposable, i.e., 
$\cD^{\pi} = \otimes_{s\in\cS}\cD^{\pi}_{s}$, where $\cD^{\pi}_{s} \subset \Delta(|\cA|)$ is an support set  for the policy $\{\pi(a|s),\ a\in\cA\}$, for any $s\in \cS$. We further assume that $\cD^{\pi}_s$ is nonempty. 
The Markov decision problem becomes
\begin{align}
\sup_{\substack{\bpii^t \in \cD^\pi \\t=0,\ldots }}\Bigg\{ \bbE_{\tau \sim (\bPi,\bq)}\Bigg[\sum_{t=0}^{\infty}\gamma^{[t]} r(a_t|s_t)  \Bigg]\Bigg\}\nonumber
\end{align}
We can then define a value function $V^{\CC,\cD^\pi}$  and Bellman equation $\cT^{\CT,\cD^\pi}[V]$ 
analogously to the standard MDP, except that the policy $\bpii$ are required to lie in the set $\cD^\pi$. Note that the Bellman update in this case $\cT^{\CT,\cD^\pi}$ is contraction for any non-empty set $\cD^{\pi}$.

\subsection{CT-MDP and  other MDP frameworks}
We first note that the CT-MDP covers the standard MDP framework when $\cD^\pi_s$ is the simplex in $\bbR^{|\cA|}$. 
Even-though the CT-MDP looks general, as we can introduce any feasible set $\cD^\pi_s$,  Theorem \ref{th:CT-MDP-and-others} below shows that, in the context of Definitions  \ref{def:equip-MDP} and  \ref{def:equip-MDP-FR}, the CT-MDP framework is not subsumed or cover any stochastic or regularized MDP  frameworks considered above, except the standard one. 
On the other hand, the  R-MDP and DS-MDP frameworks are the most general among those presented in Fig. \ref{fig:relation}, but cannot cover the CT-MDP. 
\begin{theorem}
\label{th:CT-MDP-and-others}
There is an ER-MDP model $\bX$ with parameter $\eta>0$ such that there is no CT-MDP model that is equivalent to $\bX$. On the other hand, there is also a CT-MDP model $\bY$ such that there is no R-MDP model being equivalent to $\bY$.
\end{theorem}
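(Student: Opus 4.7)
My proof plan for Theorem \ref{th:CT-MDP-and-others} is to exhibit simple single-state, two-action counterexamples in each direction and argue that the mapping ``reward $\mapsto$ optimal policy'' has fundamentally different structure in CT-MDP versus in ER-MDP / R-MDP.

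For the first direction, I would take $\bX$ to be the ER-MDP with $|\cS|=1$, $|\cA|=2$, $\gamma=0$, and entropy regularizer $\phi(\bpii) = -\eta \sum_a \pi(a) \ln \pi(a)$ for some $\eta>0$. By the softmax characterization of its optimizer (via \eqref{eq:R-bellman-eq1}), the unique optimal policy is $\pi^*(a) = \exp(r^X(a)/\eta)/\sum_{a'} \exp(r^X(a')/\eta)$, which sweeps out the full continuum $\text{int}(\Delta(2))$ as $\br^X$ ranges over $\bbR^2$. Suppose for contradiction that some CT-MDP with convex feasible set $\cD^\pi \subseteq \Delta(2)$ and reward shift $\bap$ were equivalent to $\bX$. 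The CT-MDP optimum with reward $\br^Y = \br^X - \bap$ is then the maximizer of a linear functional over $\cD^\pi$, hence an extreme point of $\cD^\pi$. Because $\Delta(2)$ is one-dimensional, $\cD^\pi$ is either a singleton or a closed line segment, so the CT-MDP attains at most two distinct optimal policies as $\br^Y$ varies, contradicting the continuum produced by the ER-MDP.

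For the second direction, I would take $\bY$ to be the CT-MDP with $|\cS|=1$, $|\cA|=2$, $\gamma=0$, and $\cD^\pi = \{(1/2,1/2)\}$, whose optimal policy is always $\bpii_0 = (1/2,1/2)$ regardless of reward. If some R-MDP $\bX$ with regularizer $\phi$ and shift $\bap$ were equivalent, then for every $\br^X \in \bbR^2$ the maximizer of $\bpii^{\T} \br^X + \phi(\bpii)$ over $\Delta(2)$ would equal $\bpii_0$. The KKT / subgradient conditions at the relative-interior point $\bpii_0$ of the simplex would then force $r^X(1) - r^X(2)$ to lie in a fixed bounded set determined by $\partial \phi(\bpii_0)$, contradicting the fact that $\br^X$ ranges over all of $\bbR^2$.

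The main obstacle is the second direction, where the argument requires ruling out degenerate choices of $\phi$: if one permitted extended-real-valued regularizers with $\phi = -\infty$ outside a prescribed convex set, then R-MDP would trivially subsume CT-MDP and the claim would fail. I would therefore invoke the paper's implicit convention that each $\phi_s$ is a proper concave function finite on (at least) the relative interior of the simplex, which is the standard setting under which the convex conjugate in \eqref{eq:R-bellman-eq1} is well-behaved and under which all canonical regularizers (Shannon entropy, KL divergence, Tsallis entropy) live. Under this convention $\partial \phi(\bpii_0)$ is bounded and the contradiction is clean.
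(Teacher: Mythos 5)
Your plan follows the same route as the paper: both directions are settled by a small two-action counterexample, the first exploiting that the ER-MDP softmax sweeps a continuum of interior policies which a linear objective over a fixed feasible set cannot reproduce, the second using a singleton feasible set that no finite concave regularizer can enforce uniformly over all rewards. (The paper's three-state chain with absorbing zero-reward states $s_1,s_2$ is just a dressed-up version of your one-state, $\gamma=0$ instance.) One genuine improvement on your side: the paper pins the singleton at the vertex $[1,0]$ and argues via bounds $L^\phi\le\phi\le U^\phi$ on the whole simplex, which silently excludes legitimate concave regularizers that diverge at the boundary (e.g.\ $\sum_a\ln\pi(a|s)$); your interior singleton $(1/2,1/2)$ only needs the superdifferential of $\phi$ at a relative-interior point to be nonempty and bounded, which holds for any proper concave $\phi$ finite near that point, and you correctly flag that indicator-type extended-real regularizers must be excluded for the claim to be true at all --- a caveat the paper leaves implicit.

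The one step that does not go through as written is in the first direction: you assume $\cD^\pi$ is convex so that the linear maximizer sits at an extreme point, but the paper's CT-MDP definition only requires each $\cD^\pi_s$ to be a nonempty subset of $\Delta(|\cA|)$, so the theorem's quantifier over \emph{all} CT-MDP models is not covered by your argument. The repair needs no convexity and is the paper's own observation: equivalence forces $\cD^\pi_{s_0}$ to contain every softmax output, hence all of the relative interior of $\Delta(2)$; but a linear functional that is non-constant along the simplex direction has no maximizer in the relative interior of any set containing that interior (its supremum is only approached toward a vertex), so for every reward outside the single degenerate value of $r(a_1)-r(a_2)-(\alpha_1-\alpha_2)$ the required interior optimum cannot be attained. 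With that substitution your proof is complete and matches the paper's.
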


However, if we fix the reward function, then we can show that, even though the CT-MDP framework does not cover the R-MDP one, but for any R-MDP model, there is a CT-MDP model specified by convex constraints that yields an identical optimal policy. 
\begin{proposition}
\label{prop:ER-to-CT}
 Given a R-MDP model defined by a tuple $(\cS,\cA,\bq, \br,\gamma)$   and the set of strictly concave functions $\{\phi_s|\ s\in\cS\}$, there exists a set of constants $\{c_s,\ s\in\cS\}$ such that the CT-MDP model defined by the same tuple but with a shifted reward function $\br'_s = \br_s - c_s$ and feasible sets $\cD^{\pi}_s = \{\bpii_s\in \Delta(|\cA|)|\ -\phi_s(\bpii_s) \leq c_s\}$ will yield an identical optimal policy.
\end{proposition}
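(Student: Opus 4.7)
}
The plan is to construct the constants $\{c_s\}$ directly from the R-MDP optimal policy and then verify that the corresponding CT-MDP Bellman operator has the same fixed point and greedy policy. Let $V^\star = V^{\RE,\bphi}$ denote the R-MDP value function and, for each state $s$, let $\bpii^{\RE,\star}_s$ be the unique maximizer of the R-MDP Bellman update at $V^\star$; uniqueness follows from the strict concavity assumption on $\phi_s$. I then define
\[
c_s \;=\; -\phi_s(\bpii^{\RE,\star}_s), \qquad \forall s \in \cS,
\]
so that by construction $\bpii^{\RE,\star}_s$ is feasible in $\cD^\pi_s = \{\bpii_s\in\Delta(|\cA|) \mid -\phi_s(\bpii_s)\le c_s\}$, with the constraint binding.

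Next I would verify that $V^\star$ is a fixed point of the CT-MDP Bellman operator on the shifted model. Writing $w^\star_{sa} = r(a|s) + \gamma\sum_{s'} q(s'|s,a)V^\star(s')$, and using that $c_s$ is action-independent so that $\bbE_{\bpii_s}[-c_s] = -c_s$ for any $\bpii_s\in\Delta(|\cA|)$, the CT-MDP Bellman update becomes
\[
\cT^{\CT,\cD^\pi}[V^\star](s) \;=\; -c_s \;+\; \sup_{\bpii_s \in \cD^\pi_s} \bw_s^{\star\T}\bpii_s.
\]
The key lemma is that this supremum is uniquely attained at $\bpii^{\RE,\star}_s$: indeed, for any other feasible $\bpii_s$, the strict concavity of $\phi_s$ together with optimality of $\bpii^{\RE,\star}_s$ in the R-MDP Bellman gives $\bw_s^{\star\T}\bpii^{\RE,\star}_s + \phi_s(\bpii^{\RE,\star}_s) > \bw_s^{\star\T}\bpii_s + \phi_s(\bpii_s)$, and feasibility $\phi_s(\bpii_s) \ge \phi_s(\bpii^{\RE,\star}_s)$ then forces $\bw_s^{\star\T}\bpii^{\RE,\star}_s > \bw_s^{\star\T}\bpii_s$. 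Substituting back and using the R-MDP Bellman identity $V^\star(s) = \bw_s^{\star\T}\bpii^{\RE,\star}_s + \phi_s(\bpii^{\RE,\star}_s) = \bw_s^{\star\T}\bpii^{\RE,\star}_s - c_s$ yields $\cT^{\CT,\cD^\pi}[V^\star](s) = V^\star(s)$.

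Finally, I would invoke the contraction property of $\cT^{\CT,\cD^\pi}$ (valid for any nonempty state-wise decomposable $\cD^\pi$, as noted in Section \ref{sec:CT-MDP}) to conclude that $V^\star$ is the unique CT-MDP value function, and that the greedy policy associated with it, which we just showed is uniquely $\bpii^{\RE,\star}$, is the unique optimal policy of the CT-MDP. Hence both frameworks produce identical optimal policies (and the CT-MDP value function differs from $V^\star$ only by the state-dependent shift already accounted for in the reward $\br'$).

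The main obstacle is the apparent circularity of letting $c_s$ depend on the R-MDP optimum: the argument is made clean by first solving the R-MDP to obtain $V^\star$ and $\bpii^{\RE,\star}_s$, then defining $c_s$ post-hoc, and only then checking the CT-MDP fixed-point equation. The other delicate point is that strict concavity of $\phi_s$ is used twice: once to ensure $\bpii^{\RE,\star}_s$ is well-defined as a unique maximizer, and once to turn the weak inequality $\phi_s(\bpii_s)\ge\phi_s(\bpii^{\RE,\star}_s)$ into a strict comparison of the linear objectives, thereby guaranteeing uniqueness of the CT-MDP optimizer.
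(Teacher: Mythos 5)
Your proposal is correct and follows essentially the same route as the paper: choose $c_s = -\phi_s(\bpii^{\RE,\star}_s)$ at the R-MDP optimum, use feasibility of $\bpii^{\RE,\star}_s$ (with the constraint binding) plus the inequality $\phi_s(\bpii_s)\ge\phi_s(\bpii^{\RE,\star}_s)$ for feasible points and strict concavity to identify the unique optimizers of the regularized and constrained Bellman updates, and then conclude via the fixed-point/contraction argument that $V^{\RE,\bphi}$ is also the CT-MDP value function with the same greedy policy. The only cosmetic difference is that you argue directly that $\bpii^{\RE,\star}_s$ strictly dominates every other feasible point, whereas the paper shows any optimizer of the constrained problem must also optimize the regularized one and invokes uniqueness; the content is identical.
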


It is important to note that $\{c_s,\, \forall s\}$ specified in  Proposition \ref{prop:ER-to-CT} depend on the reward vector $\br$, thus the CT-MDP model cannot yield identical optimal policies for any reward function. The reverse is also true, that is, for any CT-MDP model specified by a set of convex constraints, we always can find a R-MDP model that yield an identical optimal policy (see 
the supplementary material for details).

\subsection{Discussion}
In general, we have seen that a stochastic MDP model can be interpreted as a regularized one and vice-versa. Proposition \ref{prop:ER-to-CT} shows that any R-MDP or DS-MDP model can be always represented by a CT-MDP one. In other words,  adding regularizers or random noises to the reward function is \textit{equivalent} to constraining the desired optimal policies. Let us illustrate this by some examples. 

A popular  and useful regularizer in RL is a KL divergence $-\eta \KL(\bpii_s||\overline{\bpii}_s)$  between two consecutive  policies.
We have shown that this regularized model is equivalent to a stochastic model with shifted rewards and Gumbel distributed random noises. From Proposition \ref{prop:ER-to-CT}  we see that these ER-MDP and EV-MDP models can be equivalently represented by a CT-MDP with constraints $\KL(\bpii_s||\overline{\bpii}_s) \leq c_s/\eta$, where $c_s \geq 0$ for all $s\in \cS$. Clearly, if $c_s\rightarrow 0$ then the optimal policy should converge to  $\overline{\bpii}_s$ and if we increase $c_s$, the feasible set $\cD^\pi_s$ will converge to $\Delta(|\cA|)$ (i.e. the simplex in $\bbR^{|\cA|}$) and we will retain a deterministic policy as in the unconstrained MDP model.

Several RL and IRL algorithms make use of soft policies resulting from relative entropy regularizers $\phi_s(\bpii_s) =-\eta \sum_{a}\pi(a|s)\ln \pi(a|s)$ \citep{ziebart2010_IRL_Causal}. Let $\widetilde{\bpii}^*_s = \text{argmax}_{\bpii_s \in\Delta(|\cA|)} \phi_s(\bpii_s)$, we see that (i) $\widetilde{\bpii}^*_s = \be/|\cA|$ (a random walk, $\be$ is a unit vector of size $|\cA|$) and (ii) for any $\bpii_s$ 
\[
\KL(\bpii_s||\widetilde{\bpii}^*_s) = \sum_{a\in\cA}\pi(a|s)\ln \pi(a|s) - \ln |\cA| = -\phi_s(\bpii)/\eta - \ln |\cA|.
\]
Thus the constraint $-\phi_s(\bpii_s) \leq c_s$ becomes $\KL(\bpii_s||\widetilde{\bpii}^*_s) \leq c_s/\eta - \ln |\cA|$. So, we see that by adding the relative entropy terms and varying the parameter $\eta$, we are controlling the KL divergence between a desired policy and the random walk vector $\widetilde{\bpii}^*_s$. Making the optimal policy closer to the random walk indeed helps improve exploration.

This analysis can be extended to a general R-MDP model with (strictly) concave regularizers $\{\phi_s,s\in\cS\}$. Also let $\widetilde{\bpii}^*_s = \text{argmax}_{\bpii_s \in\Delta(|\cA|)} \phi_s(\bpii_s)$, which are always uniquely determined. We can show that if $\widetilde{\bpii}^*_s$ contain non-zero elements, then we can write $\nabla \phi_s(\widetilde{\bpii}^*_s) = \alpha \be$, where $\alpha$ is a scalar.  The Bregman divergence ($\BD$) generated by $-\phi_s(\cdot)$ between two policies becomes
\[
\BD^{-\phi_s}(\bpii_s||\bpii^*_s) = -\phi_s(\bpii_s) + \phi_s(\widetilde{\bpii}^*_s) + \alpha - \alpha \be^\T \widetilde{\bpii}^*_s.
\]
Thus, the constraints $-\phi_s(\bpii_s) \leq c_s$ becomes $\BD^{-\phi_s}(\bpii_s||\bpii^*_s) \leq c_s + \phi_s(\widetilde{\bpii}^*_s) + \alpha - \alpha \be^\T \widetilde{\bpii}^*_s$. This generally implies that using a regularized MDP model is equivalent to imposing a Bregman divergence requirement on the desired policies.

The idea of using KL or Bregman divergence to control the distance between two consecutive policies in a policy iteration is interesting and useful \citep{Azar2012dynamic,Abdolmaleki2018maximum,Abdolmaleki2018relative,Mankowitz2019robust,Lee2018sparse,Chow2018path}. The CT-MDP framework provides a flexible and general way to implement it. For example, one can consider other types distance measures at a greedy step, i.e., we solve a constrained Bellman update with feasible sets defined as $\cD^\pi_{s} = \{\bpii_s\in\Delta(|\cA|),\ ||\bpii_s - \bpii^k_s||\leq \xi\}$, where the norm $||\cdot||$ can be  L1, L2 or L-infinity norm. If the norm is L1 or L-infinity, we greedy step can be efficiently performed by solving a linear program. If the norm is L2, then the greed step is still  a convex optimization problem, which is generally easy to solve. We can incorporate several previous policies to have a more robust greedy step, i.e., $\cD^\pi_{s} = \{\bpii_s\in\Delta(|\cA|),\  \max_{i\in \cK}||\bpii_s - \bpii^i_s||\leq \xi\}$, where $\cK$ is a set of indexes of previous policies considered. The constraints require that a desired policy after the greedy step should not be \textit{too far} from a set of some policies computed from previous iterations.
Moreover, different types of distance measures can be combined, e.g., KL divergence or different norms,  while keeping the constrained Bellman update tractable. We provide some computational complexity analyses for such constrained models  in the supplementary material.


\section{Conclusion}
\label{sec:conclude}
In this paper we study the relation between stochastic, regularized and constrained MDP frameworks. We first propose the DS-MDP by assuming the distribution of the random noises in the S-MDP is ambiguous and we validate some basic properties of the new framework, which allows to solve a DS-MDP problem by value iterations. We show that there is a nested relationship between stochastic and regularized MDP frameworks, which provides new ways to interpret regularized MDP frameworks through the lens of stochastic ones and vice-versa. We also show a strong connection between stochastic/regularized MDPs and constrained MDPs, and show how this finding would be useful in some well-known algorithmic schemes.   

\clearpage

\clearpage

\bibliographystyle{plainnat}
\bibliography{refs,ROMDP}


\clearpage

\appendix
\section{Proofs of the Main Results}
%
\subsection{Proof of Theorem \ref{th:DS-contractionmapping-optimal-policy}}
\textbf{For} \textbf{(i)}, we see that, for any $V,V'\in\bbR^{|\cS|}$ such that $V\geq V'$
\begin{align}
\max_{a\in\cA} &\left\{ r(a|s)+ \epsilon(a|s) + \gamma\sum_{s'}q(s'|s,a)V(s')\right\} \nonumber \\
& \qquad \geq \max_{a\in\cA}\left\{ r(a|s)+ \epsilon(a|s) + \gamma\sum_{s'}q(s'|s,a)V'(s')\right\}, \:\forall s\in\cS,
\end{align}
thus $\cT^{\DS,\Xi}[V](s)\geq \cT^{\DS,\Xi}[V'](s)$ for any $s\in\cS$. The translation variance property is also easy to validate, as for any $s\in \cS$
\begin{align}
\cT^{\DS,\Xi}[V + \alpha \be](s) &=  \sup_{d_s \in \Xi_s} \left\{\bbE_{\beps_s \sim d_s}\left[\max_{a\in\cA}\left\{ r(a|s)+ \epsilon(a|s) + \gamma\sum_{s'}q(s'|s,a)V(s') + \alpha\right\} \right] \right\}   \nonumber \\
&=\sup_{d_s \in \Xi_s} \left\{\bbE_{\beps_s \sim d_s}\left[\max_{a\in\cA}\left\{ r(a|s)+ \epsilon(a|s) + \gamma\sum_{s'}q(s'|s,a)V(s') \right\} \right] \right\} + \alpha\nonumber \\
&= \cT^{\DS,\Xi}[V](s) +\alpha. \nonumber
\end{align}

For the \textbf{contraction property (ii)}, give any $V^1, V^2 \in\bbR^{|\cS|}$ and a state $s\in\cS$, in analogy to a standard proof, we consider two cases: $\cT^{\DS,\Xi}[V^1](s)\geq \cT^{\DS,\Xi}[V^2](s)$  and show that $\cT^{\DS,\Xi}[V^1](s)\geq \cT^{\DS,\Xi}[V^2](s) \leq ||V^1-V^2||_\infty$, the other case can be done in a similar way. Now for any $\xi_1>0$, let us choose a distribution $\overline{d}^1_s \in  \Xi_s$ such that
\[
\cT^{\DS,\Xi}[V^1](s) \leq     \bbE_{\beps_s \sim\overline{d}^1_s}\left[\max_{a\in\cA}\left\{ r(a|s)+ \epsilon(a|s) + \gamma\sum_{s'}q(s'|s,a)V^1(s')\right\} \right]  + \xi_1 
\]
For the sake of simplicity, we denote 
\begin{align}
z^1_s(a|\beps_s) = r(a|s)+ \epsilon(a|s) + \gamma\sum_{s'}q(s'|s,a)V^1(s') \nonumber\\
z^2_s(a|\beps_s) = r(a|s)+ \epsilon(a|s) + \gamma\sum_{s'}q(s'|s,a)V^2(s') \nonumber
\end{align}
For any fixed $\beps_s$, we write 
\begin{align}
&|\max_a \{z^1_s(a|\beps_s)\} - \max_a \{z^2_s(a|\beps_s)\}|\nonumber\\
&=\max\left\{\max_a \{z^1_s(a|\beps_s)\} - \max_a \{z^2_s(a|\beps_s); \max_a \{z^2_s(a|\beps_s)\} - \max_a \{z^1_s(a|\beps_s)\right\} \nonumber\\ 
&\stackrel{(i)}{\leq}  \max\{z^1_s(a_1|\beps_s) -  z^2_s(a_1|\beps_s);\; z^2_s(a_2|\beps_s) -  z^1_s(a_2|\beps_s)\}\nonumber\\
&=\gamma \max\left\{\sum_{s'} q(s'|s,a_1) (V^1(s')-V^2(s'));\; \sum_{s'} q(s'|s,a_2) (V^2(s')-V^1(s'))\right\}\nonumber\\
&\leq \gamma ||V^1-V^2||_\infty\label{eq:DS-eq1}
\end{align}
where in (i), $a_1 = \text{argmax}_{a\in\cA}\left\{z^1(a|\beps_s)\right\}$ and $a_2 = \text{argmax}_{a\in\cA}\left\{z^2(a|\beps_s)\right\}$. Now we evaluate $\cT^{\DS,\Xi}[V^1](s) - \cT^{\DS,\Xi}[V^2](s)$ as follows
\begin{align}
    &\cT^{\DS,\Xi}[V^1](s) - \cT^{\DS,\Xi}[V^2](s)  \nonumber \\
    &\leq \bbE_{\beps_s \sim\overline{d}^1_s}\left[\max_{a\in\cA}\left\{z^1(a|\beps_s)\right\}\right] - \bbE_{\beps_s \sim\overline{d}^1_s}\left[\max_{a\in\cA}\left\{z^2(a|\beps_s)\right\} \right] + \xi_1\nonumber\\
    &\leq \int_{\beps_s \sim \overline{d}^1_s} \left|\max_{a\in\cA}\left\{z^1(a|\beps_s)\right\} -\max_{a\in\cA}\left\{z^2(a|\beps_s)\right\}\right| f_s(\beps_s)d(\beps_s) + \xi-1\nonumber \\
    &\stackrel{(ii)}{\leq}  \int_{\beps_s \sim \overline{d}^1_s} \gamma||V^1-V^2||_\infty f_s(\beps_s)d(\beps_s) +\xi_1 = \gamma ||V^1-V^2||_\infty +\xi_1\nonumber,
\end{align}
where $f_s(\cdot)$ is the probability density function of $\beps_s$  and (ii) is due to \eqref{eq:DS-eq1}.
Since $\xi_1$ can be chosen arbitrarily small, we let $\xi_1\rightarrow 0$ and obtain 
$\cT^{\DS,\Xi}[V^1](s) - \cT^{\DS,\Xi}[V^2](s) \leq ||V^1-V^2||_\infty$.
In a similar manner, we can show that if $\cT^{\DS,\Xi}[V^1](s)< \cT^{\DS,\Xi}[V^2](s)$ then we also have  $\cT^{\DS,\Xi}[V^2](s) - \cT^{\DS,\Xi}[V^1](s) \leq \gamma ||V^1-V2||$. Thus, $||\cT^{\DS,\Xi}[V^2] - \cT^{\DS,\Xi}[V^1]||_\infty \leq \gamma ||V^1-V^2||$, indicating the contraction property of $\cT^{\DS,\Xi}[V]$. 

We now validate the \textbf{Markov optimality  (iii)}, i.e., $V^{\DS,\Xi}$ defined in \eqref{eq:DS-defineV} is the unique fixed point solution to the system $\cT^{\DS,\Xi}[V] = V$. 
The contraction property guarantees that the system $\cT^{\DS,\Xi}[V] = V$ always has a unique fixed point solution. We now prove that this a solution is also $V^{\DS,\Xi}$ defined  in \eqref{eq:DS-defineV}. Let $V^*$ be a solution to the above contraction system, then for any decision policy $\bPsi = \{\bpsi^0,\bpsi^1,\ldots\}$ and any sequence of distribution $\{\tilde{\bd}^0,\tilde{\bd}^1,\ldots\}$, where $\tilde{\bd}^t \in\Xi$ for $t=0,1,...$, we have
\begin{align}
V^*(s) &= \sup_{d_s \in \Xi_s} \left\{\bbE_{\beps_s \sim d_s}\left[\max_{a\in\cA}\left\{ r(a|s)+ \epsilon(a|s) + \gamma\sum_{s'}q(s'|s,a)V(s')\right\} \right] \right\}\nonumber \\
&\geq \bbE_{\beps_{s_0} \sim \tilde{d}^0_{s_0}}\left[ r(a_0|s_0)+ \epsilon(a_0|s_0) + \gamma\sum_{s'}q(s'|s_0,a_0)V(s')\Big|\: s_0=s, a_0 \sim \bpsi^0  \right]\nonumber\\
&\geq \bbE_{\tilde{\bd}^0,  \tilde{\bd}^1}\Bigg[ r(a_0|s_0)+ \epsilon(a_0|s_0) +  \gamma ( r(a_1|s_1)+ \epsilon(a_1|s_1)) + \gamma^2V^*(s_2) \nonumber\\
&\qquad \qquad\qquad \qquad \qquad\qquad\qquad \qquad\qquad\Big|\: s_0=s, (a_0,a_1) \sim (\bpsi^0,\bpsi^1)  \Bigg]\nonumber
\end{align}
By continuing expanding the recursion, we obtain
\begin{align}
V^*(s)
&\geq \bbE_{\tilde{\bd}^0,...,\tilde{\bd}^n}\Bigg[\sum_{t=0}^n \gamma^{[t]} (r(a_t|s_t)+ \epsilon(a_t|s_t)) + \gamma^{[n+1]}V^*(s_{n+1}) \nonumber\\
&\qquad \qquad\qquad \qquad \qquad\qquad\qquad \qquad\qquad\Big|\: s_0=s, (a_0,...,a_n) \sim (\bpsi^0,...,\bpsi^n)  \Bigg]\nonumber \\
&\geq \bbE_{\tilde{\bd}^0,...}\Bigg[\sum_{t=0}^\infty \gamma^{[t]} (r(a_t|s_t)+ \epsilon(a_t|s_t))\Bigg|\ s_0=s, (a_0,...)\sim \bPsi \Bigg] - \gamma^{[n+1]}||V^*||_\infty \nonumber\\
& - \gamma^{[n+1]}\bbE_{\tilde{\bd}^0,...}\Bigg[\sum_{t=n+1}^\infty \gamma^{[t-n-1]} (r(a_t|s_t)+ \epsilon(a_t|s_t))\Bigg|\ s_0=s, (a_0,...)\sim \bPsi \Bigg] \nonumber \\
& \geq \bbE_{\tilde{\bd}^0,...}\Bigg[\sum_{t=0}^\infty \gamma^{[t]} (r(a_t|s_t)+ \epsilon(a_t|s_t))\Bigg|\ s_0=s, (a_0,...)\sim \bPsi \Bigg] - \gamma^{[n+1]}||V^*||_\infty -  \frac{\gamma^{[n+1]}}{1-\gamma}\alpha\label{eq:DS-eq2}
\end{align}
where $\alpha = \sup_{s\in\cS}\sup_{d_s \in\Xi_s} \bbE\left[\max_a\Big\{r(a|s)+\epsilon(a|s)\Big\} \right]<\infty $ which is always finite, according to our assumptions. From \eqref{eq:DS-eq2}, by letting $n\rightarrow \infty$ to obtain,  for any decision policy $\bPsi = \{\bpsi^0,\bpsi^1,\ldots\}$ and sequence of distribution $\{\tilde{\bd}^0,\tilde{\bd}^1,\ldots\}$,
\begin{equation*}
V^*(s) \geq  \bbE_{\tilde{\bd}^0,...}\Bigg[\sum_{t=0}^\infty \gamma^{[t]} (r(a_t|s_t)+ \epsilon(a_t|s_t))\Bigg|\ s_0=s, (a_0,...)\sim \bPsi \Bigg]
\end{equation*}
Thus,
\begin{equation}
\label{eq:DS-eq3}
V^*(s) \geq  \sup_{\substack{\bd^t\in \Xi; \bpsi^t \\ t= 0,\ldots }}\bbE_{\substack{\beps^t \sim \bd^t \\ t=0,\ldots}}\Bigg[\sum_{t=0}^\infty \gamma^{[t]} (r(a_t|s_t)+ \epsilon(a_t|s_t))\Bigg|\ s_0=s, (a_0,...)\sim \bPsi \Bigg]
\end{equation}
Now, to prove the opposite direction of Inequality \eqref{eq:DS-eq3}, the Bellman equation tells use  that for any $\xi>0$, there are  distributions $\{\tilde{\bd}^0,\tilde{\bd}^1,...\}$ and decision rule $\bPsi = \{\bpsi^0,\bpsi^1,\ldots\}$ such that 
\begin{align}
    V^*(s) &\leq \bbE_{\beps^0_{s_0} \sim \tilde{d}^0_{s_0}}\left[ r(a_0|s_0)+ \epsilon(a_0|s_0) + \gamma V^*(s_1)\Big|\: s_0=s, a_0 \sim \bpsi^0  \right] + \xi \nonumber\\
    &\leq \bbE_{\tilde{\bd}^0,...,\tilde{\bd}^n}\Bigg[\sum_{t=0}^n \gamma^{[t]} (r(a_t|s_t)+ \epsilon(a_t|s_t)) + \gamma^{[n+1]}V^*(s_{n+1}) \nonumber\\
    &  \qquad\qquad\Big|\: s_0=s, (a_0,...,a_n) \sim (\bpsi^0,...,\bpsi^n)  \Bigg] + \xi\sum_{t=0}^n \gamma^{[t]}\nonumber \\
    &\leq \bbE_{\tilde{\bd}^0,...}\Bigg[\sum_{t=0}^\infty \gamma^{[t]} (r(a_t|s_t)+ \epsilon(a_t|s_t))\Bigg|\ s_0=s, (a_0,...)\sim \bPsi \Bigg] + \gamma^{[n+1]}||V^*||_\infty \nonumber\\
& + \gamma^{[n+1]}\bbE_{\tilde{\bd}^0,...}\Bigg[\sum_{t=n+1}^\infty \gamma^{t-n-1} (r(a_t|s_t)+ \epsilon(a_t|s_t))\Bigg|\ s_0=s, (a_0,...)\sim \bPsi \Bigg]+ \xi\sum_{t=0}^n \gamma^{[t]} \nonumber \\
&\leq \bbE_{\tilde{\bd}^0,...}\Bigg[\sum_{t=0}^\infty \gamma^{[t]} (r(a_t|s_t)+ \epsilon(a_t|s_t))\Bigg|\ s_0=s, (a_0,...)\sim \bPsi \Bigg] + \gamma^{[n+1]}||V^*||_\infty +  \nonumber \\
&\qquad\qquad + \frac{\gamma^{[n+1]}\alpha}{1-\gamma} + \xi\sum_{t=0}^n \gamma^{[t]} \nonumber
\end{align}
So let $n \rightarrow \infty$  we have
\[
V^*(s)\leq  \sup_{\substack{\bd^t\in \Xi; \bpsi^t\\ t=0,\ldots}}\bbE_{\substack{\beps^t \sim \bd^t \\ t=0,\ldots}}\Bigg[\sum_{t=0}^\infty \gamma^{[t]} (r(a_t|s_t)+ \epsilon(a_t|s_t))\Bigg|\ s_0=s, (a_0,...)\sim \bPsi \Bigg] +\frac{\xi}{1-\gamma}, 
\]
then by letting $\xi\rightarrow 0$ and combining with \eqref{eq:DS-eq3} we have
\[
V^*(s) =   \sup_{\substack{\bd^t\in \Xi; \bpsi^t\\ t=0,\ldots}}\bbE_{\substack{\beps^t \sim \bd^t \\ t=0,\ldots}}\Bigg[\sum_{t=0}^\infty \gamma^{[t]} (r(a_t|s_t)+ \epsilon(a_t|s_t))\Bigg|\ s_0=s, (a_0,...)\sim \Psi \Bigg] = V^{\DS,\Xi}(s),\;\forall s\in \cS,
\]
which is the desired result. 

\textbf{For (iv)}, from the above results, we see that an optimal decision rule can be computed as 
\[
\psi^*(s|\beps_s) = \text{argmax}_{a} \left\{r(a|s) + \epsilon(a|s) + \gamma \sum_{s'}q(s'|a,s) V^{\DS,\Xi}(s')\right\}
\]
and assume that $d^*_s$ is the probability distribution of $\beps_s$ that attains the maximization in the Bellman update $\cT^{\DS,\Xi}[V(s)$, i.e., $$d^*_s= \text{argmax}_{d_s\in\Xi_s} \bbE_{\beps_s \sim d_s}\left[\max_{a} \Big\{w_{sa} + \epsilon(a|s) \Big\}\right],$$
where $w_{sa} = r(a|s) + \gamma \sum_{s'}q(s'|a,s) V^{\DS,\Xi}(s')$ for any $a\in\cA,s\in\cS$.
The density of the optimal decision rule $\bpsi_s$ (i.e. optimal policy) becomes 
\begin{align}
\pi^{\DS,*}(a|s) &= \bbP\left[\psi^*(s|\beps_s) = a  \Big| \beps \sim d^*_s\right] \nonumber \\
&\stackrel{(i)}{=} \frac{\partial \varphi_s(\bw_s)}{\partial w_{sa}},\nonumber
\end{align}
where $\varphi_s(\bw_s) = \bbE_{\beps_s \sim d^*_s}[\max_a \{ w_{sa}+\epsilon(a|s)\}]$ and (i) is due to \cite{Mcfadden1981}. Note that
\[
 \varphi_s(\bw_s) = \sup_{d_s\in\Xi_s}\left\{\bbE_{\beps_s \sim d_s}[\max_a \{ w_{sa}+\epsilon(a|s)\}]\right\}.
\]
We obtain the desired results and complete the proof.

%

\subsection{Proof of Proposition \ref{th:Ev-ER-MDPs}}
Consider an ER-MDPs defined by the following maximization of expected discounted regularized reward
\begin{align}
\sup_{\bpii^0,\ldots }\Bigg\{ \bbE_{\tau \sim (\bPi,\bq)}\Bigg[\sum_{t=0}^{\infty}\gamma^{[t]} \Big(r(a_t|s_t) - \eta \ln(\pi^t(a_t|s_t)) \Big)  \Bigg]\Bigg\},
\end{align}
where $\eta \geq 0$. According to \cite{Ziebart2008maximum,Haarnoja2017RL_ER}, the following policy  is optimal to the ER-MDP problem under reward function $\br$
\begin{equation}
\label{eq:proof-ER-EV-eq1}
\pi^{\EV,\eta}(a|s) = \frac{\exp\Big(h(a|s)/\eta\Big)}{\sum_{a'\in\cA} \exp\Big(h(a'|s)/\eta\Big)}, \ \forall s\in\cS, a\in\cA
\end{equation}
where $h(a|s) = r(a|s) +  \gamma \sum_{s'}q(s'|s,a) V^{\EV}(s')$,  $V^{\EV}$ is the unique fixed point solution to the system of equations 
\[
V(s) = \eta \ln \Bigg(\sum_{a\in\cA} \exp\Big(   \frac{1}{\eta}r(a|s) +  \gamma \frac{1}{\eta} \sum_{s'}q(s'|s,a) V(s') \Big) \Bigg).
\]
Now look at the an EV-MDP defined as follows
\begin{align}
   \sup_{\bpsi^0,\bpsi^1,...}  \bbE_{\beps} \left\{ \left[  \bbE_{\tau \sim (\bPsi,\bq)}\Bigg[\sum_{t=0}^{\infty}\gamma^{[t]} \Big(r(a_t|s_t) + \eta\epsilon(a_t|s_t)\Big) \Bigg] \right]\right\},\label{eq:MDP-SR}
\end{align}
where $\epsilon(a|s)$, $\forall a,s$, are i.i.d and  follow the Gumbel distribution, then \cite{Rust1987GMC} show that the MDP problem \eqref{eq:MDP-SR} yield a value function  and  a set of optimal choice probabilities (i.e. policy) that are exactly equal to those given in  \eqref{eq:proof-ER-EV-eq1}. This indicates the equivalence between EV-MDPs and ER-MDPs. We also note here that if $\eta = 0$, which describes the case that the entropy terms $\pi(a|s)\ln \pi(a|s)$ or the noise term $\epsilon(a|s)$ are neglected from the MDPs, then the EV-MDP and ER-MDP become the standard unconstrained MDP. 
On the other hand, it is not difficult to give an example of MDP such that any optimal policy to the standard MDP need to satisfy $\pi^*(a|s) $ only takes a value of 0 or 1, but any optimal policy to the ER-MDP problem satisfies $\pi^{*,\EV}(a|s) \in (0,1)$ for some pairs of $(a,s)\in\cA\times\cS$. This validates the claim that ER-MDPs and ER-MDPs strictly subsumes the standard MDPs. We complete the proof.

%

\subsection{Proof of Theorem \ref{th:R-and-DS-MDP}}
To prove the equivalence, let us  consider a R-MDP model specified by a tuple $(\cS,\cA,\bpii,\bq,\br,\gamma)$ and by the set of concave function $\bphi = \{\phi_s,\ s\in\cS\}$,  and a DS-MDP model specified by the same tuple and  ambiguity sets $\{\Xi_s, s\in\cS \}$. Theorem \ref{th:DS-contractionmapping-optimal-policy} and \cite{Geist2019theory} tell us that these two models yield the following value functions that are unique solutions to the following contraction systems
\begin{align}
    \cT^{\RE,\bphi}[V](s) &= \sup_{\bpii_s} \big\{\bw_s^\T \bpii_s + \phi_s(\bpii_s)\big\} \label{eq:RE-contraction-eq1} \\
        \cT^{\DS,\Xi}[V](s) &= \sup_{d_s \in \Xi_s}  \Big\{\bbE_{\beps_s \sim d_s}[\max_a \{ w_{sa}+ \epsilon(a|s)\}] \Big\}, \label{eq:DS-contraction-eq1} 
\end{align}
where $\bw_s$ is a vector of size $|\cA|$ with entries $w_{sa} = r(a|s)+ \gamma \sum_{s'}q(s'|s,a)V(s')$. 
Using Theorem 1 in   \cite{Feng2017relationDC}, we have that for any strictly convex function $\widetilde{\phi}_s(\cdot)$, there is always a distribution set $\widetilde{\Xi}_s$ such that $\cT^{\RE,\widetilde{\bphi}}[V](s) = \cT^{\DS,\widetilde{\Xi}}[V](s)$ for any $\bw_s\in \bbR^{|\cA|}$, and vise-versa. As a result, for any set of convex functions $\widetilde{\bphi} = \{\widetilde{\phi}_s,s\in\cS\}$ there always a set $\widetilde{\Xi}  = \{\widetilde{\Xi}_s,s\in\cS\}$ such that  $\cT^{\RE,\bphi}[V] = \cT^{\DS,\Xi}[V]$ for any $V \in \bbR^{|\cS|}$, and vice-versa. So, for any R-MDP model, there is a DS-MDP model such that the two contraction systems yield the same value functions, i.e., $V^{\RE,\widetilde{\bphi}} = V^{\DS,\widetilde{\Xi}}$, and vice-versa. On  other hand, the optimal policies given by the two MDP models are
\begin{align}
    \bpii^{\RE,\widetilde{\bphi}}_s &= \text{argmax}_{\bpii_s}\Big\{ (\bw^{\RE}_s)^\T \bpii_s  + \widetilde{\phi}_s(\bpii_s);\Big\} \\
    \bpii^{\DS,\widetilde{\Xi}}_s &= \nabla_{\bw} \Bigg\{\sup_{d_s \in \widetilde{\Xi}_s}  \Big\{\bbE_{\beps_s \sim d_s}[\max_a \{ w^{\DS}_{sa}+ \epsilon(a|s)\}] \Big\}\Bigg\}
\end{align}
where $\bw^{\RE}_s$ and $\bw^{\DS}_s$ are vectors of size $|\cA|$ with entries $w^{\RE}_{sa} =  r(a|s)+ \gamma\sum_{s'}q(s'|s,a)V^{\RE,\widetilde{\bphi}}(s')$ and $w^{\RE}_{sa} =  r(a|s)+ \gamma\sum_{s'}q(s'|s,a)V^{\DS,\widetilde{\bphi}}(s')$ for all $a\in\cA$. Since  $V^{\RE,\widetilde{\bphi}} = V^{\DS,\widetilde{\Xi}}$, $\bw^{\RE}_s = \bw^{\DS}_s$ for all $s\in\cS$ and  any reward function $\br$. 
Moreover, from the way we select $\widetilde{\phi}_s$ and $\widetilde{\Xi}_s$ we have
\[
\varphi_s(\bw_s) = \max_{\bpii_s}\Big\{ (\bw_s)^\T \bpii_s  + \widetilde{\phi}_s(\bpii_s)\Big\} = \sup_{d_s \in \widetilde{\Xi}_s}  \Big\{\bbE_{\beps_s \sim d_s}[\max_a \{ w_{sa}+ \epsilon(a|s)\}] \Big\},\ \forall \bw_s\in\bbR^{|\cA|}.
\]
From \cite{Feng2017relationDC} we also have  
\[
\nabla_{\bw_s} \varphi_s(\bw_s) = \text{argmax}_{\bpii_s}\Big\{ (\bw_s)^\T \bpii_s  + \widetilde{\phi}_s(\bpii_s)\Big\}.
\]
Thus,  $ \bpii^{\RE,\widetilde{\bphi}}_s =  \bpii^{\DS,\widetilde{\Xi}}_s $ for any reward function $\br$. So, in summary, for any R-MDP model specified by a set $\widetilde{\bphi}$, there is always a DS-MDP model specified by a distributions set $\widetilde{\Xi}$  such that $\bpii^{\RE,\widetilde{\bphi}}_s =  \bpii^{\DS,\widetilde{\Xi}}_s $ for any $s$, and vice-versa. In other words, the R-MDP and DS-MDP frameworks are equivalent. 

To prove the claim that the R-MDP framework strictly subsumes the S-MDP one, we just use a result from \cite{Feng2017relationDC} (for one-step decision problems) saying that if $|\cA|\geq 3$ there exist a convex function $\phi_s:\bbR^{|\cA|}\rightarrow \bbR$ such that there is no distribution $d_s$ such that the following holds for any $\bw_s \in \bbR^{|\cA|}$
\[
 \text{argmax}_{\bpii_s \in\Delta(|\cA|)}\Big\{ (\bw_s^\T \bpii_s  + {\phi}_s(\bpii_s)\Big\} = \nabla_{\bw} \Bigg\{  \Big\{\bbE_{\beps_s \sim d_s}[\max_a \{ w_{sa}+ \epsilon(a|s)\}] \Big\}\Bigg\}.
\]
We now complete the proof.

%

\subsection{Proof of Proposition  \ref{prop:S-MDP-subsume-EV-MDP}}

It is clear that the S-MDPs covers the ER-MDPs, thus also covers the EV-MDP framework. To show that there is a S-MDP model that is not equivalent to any EV-MDP model, we just take a small MDP example of three states $\cS = \{s_0,s_1,s_2\}$ and assume that there are two possible action $\cA = \{a_1,a_2\}$. The transition probabilities are given as $q(s_1|a_1,s_0) = 1$, $q(s_2|a_2,s_0) = 1$, $q(s_1|a_1,s_1) =  q(s_1|a_2,s_1) = 1$ and $q(s_2|a_2,s_2) = q(s_2|a_1,s_2)$. In other words, we can move from state $s_0$ to $s_1$  with probability 1 by making action $a_1$ and move from $s_0$ to $s_2$ by making action $a_2$. From states $s_1$ or $s_2$, we have a probability of 1 to stay at the same state, under any action. We further assume that $r(a|S_1) = r(a|s_2) = 0$ for all $a\in \cA$. It is easy to see that an optimal policy under an EV-MDP or ER-MDP model with parameter $\eta>0$ is given as
\begin{equation}
\label{eq:S-subsum-EV-eq0}
\pi^{\EV,\eta}(a_1|s_0) = \frac{e^{r(a_1|s_0)/\eta}}{e^{r(a_1|s_0)/\eta}+e^{r(a_2|s_0)/\eta}};\ \pi^{\EV,\eta}(a_2|s_0) = \frac{e^{r(a_2|s_0)/\eta}}{e^{r(a_1|s_0)/\eta}+e^{r(a_2|s_0)/\eta}};  
\end{equation}
which leads to 
\begin{equation}
\label{eq:S-subsum-EV-eq1}
\frac{\pi^{\EV,\eta}(a_1|s_0)}{\pi^{\EV,\eta}(a_2|s_0)} = \exp\left(r(a_1|s_0)/\eta - r(a_2|s_0)/\eta\right).    
\end{equation}
Now, to show that the S-MDP framework strictly subsumes the EV-MDP one, we consider the above example and use a S-MDP model with $\epsilon(a|s) = 0$ for all $(a,s)\in \cA\times\cS$ except $\epsilon(a_1|s_0)$ that follows a uniform distribution in  $[0,1]$. Assume that the S-MDP with reward $\br'  =\br+ \bap$ will yield similar optimal policies for any $\br\in\bbR^{|\cS|\times|\cA|}$.
Clearly, the S-MDP model has the following optimal policy
\[
\pi^{\SR,\beps}(a_1|s_0) = \bbP\big[ r(a_1|s_0) + \alpha_{s_0a_1}+ \epsilon(a_1|s_0) \geq r(a_2|s_0) + \alpha_{s_0a_2}\big]
\]
and the ration between the two action properties under the S-MDP model becomes
\begin{align}
\frac{\pi^{\SR,\beps}(a_1|s_0)}{\pi^{\SR,\beps}(a_2|s_0)}
 &\stackrel{(i)}{=}  \frac{\bbP\big[\epsilon(a_1|s_0) \geq r(a_2|s_0) - r(a_1|s_0) + \beta_{s_0} \big] }{\bbP\big[ \epsilon(a_1|s_0) < r(a_2|s_0) - r(a_1|s_0) +\beta_{s_0}\big]} \nonumber \\
& =\begin{cases}
0 & \text{ if }r(a_2|s_0) - r(a_1|s_0) + \beta_{s_0} \leq 0\\
\infty & \text{ if }r(a_2|s_0) - r(a_1|s_0) + \beta_{s_0} \geq 1 \\
\frac{r(a_2|s_0) - r(a_1|s_0) + \beta_{s_0}}{1-(r(a_2|s_0) - r(a_1|s_0) + \beta_{s_0})} &\text{ if }r(a_2|s_0) - r(a_1|s_0) + \beta_{s_0} \in (0,1),
\end{cases}
\label{eq:S-subsum-EV-eq2}    
\end{align}
where in \textit{(i)}, $\beta_{s_0} = \alpha_{s_0a_2} - \alpha_{s_0a_1}$. Now we note that  the probabilities given in \eqref{eq:S-subsum-EV-eq0} is a unique solution to the EV-MDP model. Moreover, it is clear that the two ratios in \eqref{eq:proof-ER-EV-eq1} and \eqref{eq:S-subsum-EV-eq2} can not be equal for any reward function $\br$. This implies that the EV-MDP framework does not cover the above S-MDP model. Thus, the S-MDP framework strictly subsumes the ER-MDP and ER-MDP ones. We complete the proof.  

%

\subsection{Proof of Theorem \ref{th:CT-MDP-and-others}}
We first note that, in a CT-MDP model, the value function is defined as 
\[
V^{\CC,\cD^\pi}(s) = \sup_{\bpii \in \cD^\pi}\Bigg\{ \bbE_{\tau \sim (\bpii,\bq)}\Bigg[\sum_{t=0}^{\infty}\gamma^{[t]} r(a_t|s_t)\ \Bigg| s=s_0  \Bigg]\Bigg\},
\]
and the Bellman equation
\begin{equation}
\label{eq:bellman-CT-MDP}
\cT^{\CC,\cD^\pi}[V](s) = \sup_{\bpii_s\in\cD^\pi_s} \left\{\bbE_{\bpii_s} \left[r(a|s) + \gamma \sum_{s'}q(s'|s,a) V(s')\right]\right\}.
\end{equation}
Then we see that $\cT^{\CC,\cD^\pi}[V]$ is a contraction of parameter $\gamma$ and  $V^{\CT,\cD^\pi}$ is the unique solution to the system $\cT^{\CC,\cD^\pi}[V] = V$. 
Here the contraction property holds for any nonempty set $\cD^\pi_s$, not necessarily convex or compact (see Section \ref{sup:sec:contraction-CT-MDP} below for a  more detailed discussion about this). 
To compute an optimal policy, we need to solve the Bellman equation \eqref{eq:bellman-CT-MDP} to obtain $V^{\CC,\cD^\pi}$ and solve the following problem to get an optimal policy
\[
\bpii^*_s = \text{argmax}_{\bpii_s\in\cD^\pi_s} \left\{\bbE_{\bpii_s} \left[r(a|s) + \gamma \sum_{s'}q(s'|s,a) V^{\CT,\cD^\pi}(s')\right]\right\}.
\]
Now, we prove the results by contradiction. 
For the sake of simplicity, we just use a small MDPs of three states $\cS = \{s_0,s_1,s_2\}$ and two actions $\cA = \{a_1,a_2\}$ as  in the proof of Proposition \ref{prop:S-MDP-subsume-EV-MDP}. Now, consider an ER-MDP model based on this set of states and actions, we see that the optimal policy for a given reward function $\br$ becomes $\pi(a_1|s_0) = \exp(r(a_1|s_0))/R$ and $\pi(a_2|s_0) = \exp(r(a_2|s_0))/R$, where $R= \exp(r(a_1|s_0))+ \exp(r(a_2|s_0))$. Clearly, $\bpii_{s_0}$ will span all over the simple $\Delta(2)$ when the reward $\br$ varies in $\bbR^2$. So if there is a CT-MDP model that is equivalent to the above ER-MDP model, than $\cD^{\pi}_{s_0}$ has to cover all the simplex $\Delta(2)$ as well, i.e.,  the  CT-MDP model becomes a standard unconstrained one, thus yielding deterministic optimal policies (i.e. the probability of making an action only takes a value of 0 or 1).  

For the second statement, we consider a CT-MDP model with a singleton feasible set that contains only one deterministic solution $\widetilde{\bpii}_{s_0} = [1,0]$. Now, assume that there is a R-MDP model specified by a set of concave function $\bphi$ 
that is equivalent to the above CT-MDP model. That is, for any reward function $\br$, the R-MDP model always yields an optimal policy such that $\pi(a_1|s_0) = 1$ and $\pi(a_1|s_0) = 0$. We will prove that this cannot be true for any reward function $\br$. To this end, we look at the Bellman equation at state $s_0$,
\begin{equation}
\label{eq:CT-MDP-eq1}
\bpii^*_{s_0} = \text{argmax}_{\bpii_{s_0}}\Big\{\sum_{a\in\cA} r(a|s_0)\pi(a|s_0) + \phi(\bpii_{s_0})\Big\},
\end{equation}
with a note that $\phi_{s_0}(\bpii_{s_0})$ is bounded in $\Delta(2)$, i.e., there exists $U^{\phi}, L^{\phi} \in \bbR$ such that $L^\phi \leq \phi(\bpii_{s_0}) \leq U^\phi$, $\forall \bpii_{s_{0}} \in\Delta(2)$. So if we set $r(a_1|s_0) = 0$ and $r(a_1|s_0) = |U^{\phi}- L^{\phi}|+1$, then we have $\phi([1,0]) -\phi([0,1])  < U^{\phi}-L^{\phi} < r(a_2|s_0)$, or $r(a_1|s_0)+ \phi_{s_0}([1,0]) < r(a_2|s_0)+ \phi_{s_0}([0,1])$, implying that $\widetilde{\bpii}_{s_0} = [1,0]$ is not an optimal solution to  \eqref{eq:CT-MDP-eq1}. This contradicts our initial assumption on the equivalence between the R-MDP and CT-MDP models.  
We complete the proof.

\subsection{Proof of Proposition  \ref{prop:ER-to-CT}}

For any vector $\bw_s\in \bbR^{|\cA|}$, let $\bpii^{*}_s$ be an optimal solution to the problem $\sup_{\bpii_s}\{\bw^\T_s \bpii_s +\phi_s(\bpii_s)\}$. Moreover, we choose
  $c_s = -\phi_s(\bpii^{*}_s)$ and  consider the two following problems
\begin{equation*}
\text{(P1)}\qquad    \sup_{\bpii_s \in \Delta(|\cA|)}\left\{  \bw^\T_s \bpii_s + \phi_s(\bpii_s)\right\}
\end{equation*}
\begin{equation*}
    \text{(P2)}\qquad    \sup_{\bpii_s \in \Delta(|\cA|)}\left\{  \bw^\T_s \bpii_s - c_s \Big|\  -\phi_s(\bpii_s)\leq c_s\right\}
\end{equation*}
We will show that any optimal solution to (P2) is also optimal to (P1). 
Clearly, if $\bpii^*_s$ is optimal to (P1) than it is feasible to (P2).
Now let $\bpii^{**}_s$ is optimal to (P2), from the feasibility of $\bpii^{*}_s$ to (P2) we have
\begin{equation}
\label{eq:CT-MDP-eq2}
\bw_s^\T \bpii^*_s \leq \bw_s^\T \bpii^{**}_s,    
\end{equation}
which leads to 
\[
\bw_s^\T \bpii^*_s + \phi_s(\bpii^*_s) \stackrel{(i)}{=} \bw_s^\T \bpii^*_s -c_s \stackrel{(ii)}{\leq} \bw_s^\T \bpii^{**}_s + \phi_s(\bpii_s^{**}),
\]
where (i) is because $c_s = -\phi_s(\bpii^{\RE,\bphi}_s) = -\phi_s(\bpii^*_s)$ and  (ii) is due to \eqref{eq:CT-MDP-eq2} and  the fact that $\bpii^{**}_s$ is feasible to (P2), thus $\phi_s(\bpii^{**}_s)\geq -c_s$.
Since $\bpii^*_s$ is optimal to (P1), $\bpii^{**}_s$ is also optimal to (P1). 
Moreover, (P1) has a strictly concave objective function, it always yields a unique optimal solution $\bpii^{*}_s$. Thus for any $\bpii^{**}_s$ optimal to (P2), we have $\bpii^{**}_s = \bpii^{*}_s$.

Now, let  $\bpii^*$ is an optimal  solution to a R-MDP problem with $\bphi  = \{\phi_s,s\in\cS\}$ and define $c_s = -\phi_s(\bpii^*_s)$ and $\cD^\pi_s = \{\bpii_s\in\Delta(|\cA|)| -\phi_s(\bpii_s)\leq c_s\}$. From the above results, we see that the two following systems always yield the same optimal solution and optimal value.
\begin{align}
    &\sup_{\bpii_s} \left\{\bbE_{\bpii_s}\left[ r(a|s) + \gamma \sum_{s'}q(s'|a,s)V^{\RE,\bphi}(s') \right]+ \phi_s(\bpii_s)\right\}\nonumber \\
    &\max_{\bpii_s\in\cD^\pi_s}\left\{\bbE_{\bpii_s}\left[ r(a|s) + \gamma \sum_{s'}q(s'|a,s)V^{\RE,\bphi}(s') \right]\right\}. \nonumber
\end{align}
Therefore, $\cT^{\RE,\bphi}[V^{\RE,\bphi}](s) = \cT^{\CT,\cD^{\pi}}[V^{\RE,\bphi}](s)$. Since $V^{\RE,\bphi}$ is a solution to the system $\cT^{\RE,\bphi}[V] = V$, we have
\[
\cT^{\CT,\cD^{\pi}}[V^{\RE,\bphi}](s) = V^{\RE,\bphi}(s).
\]
Hence, $V^{\RE,\bphi}$ is also a fixed point solution to $\cT^{\CT,\cD^{\pi}}[V] = V$, i.e., $V^{\RE,\bphi} = V^{\CT,\cD^\pi}$. 
As a result, the two models  yield the same optimal policy. We complete the proof.  


\section{Relevant Discussions}

\label{sup:sec:Examples-DS-MDP}

\subsection{Contraction Property of $\cT^{\CT,\cD^\pi}$}
\label{sup:sec:contraction-CT-MDP}

We revisit the contraction property  of $\cT^{\CT,\cD^\pi}[V]$ under arbitrarily non-empty feasible sets $\{\cD^\pi_s,\;  s\in\cS\}$. The proof  is similar to the unconstrained model but we provide it here for the sake of self-contained. For any $V^1,V^2\in\bbR^{|\cA|}$, for each state $s\in\cS$,  we consider two cases: $\cT^{\CC,\cD^\pi}[V^1](s) \geq \cT^{\CC,\cD^\pi}[V^2](s)$ or    $\cT^{\CC,\cD^\pi}[V^1](s) <\cT^{\CC,\cD^\pi}[V^2](s)$. The following evaluations are for the formal case and the latter can be done analogously.
For any $\xi>0$, let $\overline{\bpii}^1_s \in\cD^\pi_s$ a solution such that
\[
\sup_{\bpii_s\in\cD^\pi_s} \left\{ \bbE_{\bpii_s} \left[ r(a|s) + \gamma \sum_{s'}q(s'|s,a) V^1(s')\right]\right\} \leq \bbE_{\overline{\bpii}^1_s} \left[ r(a|s) + \gamma \sum_{s'}q(s'|s,a) V^1(s')\right] +\xi,
\]
and we write, for any $s\in\cS$,
\begin{align}
    &|\cT^{\CC,\cD^\pi}[V^1](s) - \cT^{\CC,\cD^\pi}[V^2](s)|  = \cT^{\CC,\cD^\pi}[V^1](s) - \cT^{\CC,\cD^\pi}[V^2](s)\nonumber\\
    &\leq \bbE_{\overline{\bpii}^1_s} \left[ r(a|s) + \gamma \sum_{s'}q(s'|s,a) V^1(s')\right] - \bbE_{\overline{\bpii}^1_s} \left[ r(a|s) + \gamma \sum_{s'}q(s'|s,a) V^2(s')\right] + \xi\nonumber\\
    &= \gamma \sum_{a\in\cA,s'\in\cS}\overline{\pi}^1(a|s)q(s'|s,a) (V^1(s')-V^2(s'))+\xi\nonumber\\
    &\leq \gamma ||V^1-V^2||_\infty+\xi.\nonumber
\end{align}
Thus, $||\cT^{\CC,\cD^\pi}[V^1] - \cT^{\CC,\cD^\pi}[V^2]||_\infty  \leq \leq \gamma ||V^1-V^2||_\infty + \xi$. Since  $\xi$ can be arbitrarily small, we let $\xi\rightarrow 0$ to have  $||\cT^{\CC,\cD^\pi}[V^1] - \cT^{\CC,\cD^\pi}[V^2]||_\infty  \leq \leq \gamma ||V^1-V^2||_\infty$. The case that $\cT^{\CC,\cD^\pi}[V^1](s) <\cT^{\CC,\cD^\pi}[V^2](s)$ can be done similarly, which validates the contraction property of $\cT^{\CC,\cD^\pi}[V]$. It is also clear that the contraction property holds for any non-empty set $\cD^\pi$.

Now we validate the result that if $V^*$ is the unique solution of the contraction mapping $\cT^{\CC,\cD^\pi}[V] = V$, then 
\[
V^*(s) = V^{\CT,\cD^{\pi}}(s) = \sup_{\bpii^t \in \cD^\pi; t=0,\ldots}\Bigg\{ \bbE_{\tau \sim (\bPi,\bq)}\Bigg[\sum_{t=0}^{\infty}\gamma^{[t]} r(a_t|s_t)\ \Bigg| s=s_0  \Bigg]\Bigg\}.
\]
This can be done similarly as in the unconstrained MDP case, thus we skip the rest. 

%

\subsection{From CT-MDPs to R-MDPs}
\label{sup:sec:from-R-MDP-to-CT-MDP}
The following proposition shows that a CT-MDP model specified by a set of convex constraints can be equivalently represented by a R-MDP model. The proposition is easy to validate, using Lagrange duality.
\begin{proposition}
\label{prop:CT-to-RE}
 Given any CT-MDP model $\bX$ defined by a tuple $(\cS,\cA,\bq, T=\infty,\br,\gamma)$ and  feasible sets $\cD^{\pi}_s = \{\bpii_s\in \Delta(|\cA|)|\ l^k_s(\bpii_s) \leq c^k_s, k=1,...,K_s\}$, where $l^k_s(\bpii_s)$ are strictly convex in $\Delta(|\cA|)$. Assuming also that the Slater conditions hold for all $s\in\cS$, i.e., $c^k_s> \inf_{\bpii_s} l^k_s(\bpii_s),\, \forall k\in[K], s\in\cS$, then there are a set of constants $0\leq \overline{\lambda}_{sk}<\infty$, $\forall s,k$ such that the R-MDP model defined by the same tuple with the set of regularizers $\{\phi_s(\bpii_s) = -\sum_{k}\overline{\lambda}_{sk} (l^k_s(\bpii_s)-c_s),\ \forall s \in \cS\}$  yields an identical optimal policy. 
\end{proposition}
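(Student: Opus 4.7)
The plan is to invoke Lagrangian duality at each per-state greedy step of the CT-MDP and then to verify that the CT-MDP value function is also the fixed point of the resulting R-MDP Bellman operator — a strategy parallel to Proposition \ref{prop:ER-to-CT} but run in the opposite direction (from constrained to regularized).

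First I would solve the CT-MDP to obtain $V^{\CT,\cD^\pi}$ and an optimal stationary policy $\bpii^*$, and for each $s\in\cS$ set $w_{sa} = r(a|s) + \gamma\sum_{s'}q(s'|s,a)V^{\CT,\cD^\pi}(s')$. The greedy step at state $s$ is then a convex program with linear objective over the simplex $\Delta(|\cA|)$ subject to the convex constraints $l^k_s(\bpii_s)\le c^k_s$. Since the Slater condition holds by assumption, strong duality applies and the KKT conditions yield nonnegative multipliers $\overline{\lambda}_{sk}\ge 0$ such that (i) $\bpii^*_s$ maximizes the Lagrangian $\bw_s^\T \bpii_s - \sum_k \overline{\lambda}_{sk}(l^k_s(\bpii_s)-c^k_s)$ over $\Delta(|\cA|)$, and (ii) complementary slackness $\overline{\lambda}_{sk}(l^k_s(\bpii^*_s)-c^k_s) = 0$ holds for each $k$. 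The boundedness $\overline{\lambda}_{sk}<\infty$ again follows from Slater's condition. These are the multipliers claimed in the proposition, and $\phi_s(\bpii_s) := -\sum_k \overline{\lambda}_{sk}(l^k_s(\bpii_s)-c^k_s)$ is concave on $\Delta(|\cA|)$ since each $-l^k_s$ is.

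Next I would verify that $V^{\CT,\cD^\pi}$ is also a fixed point of $\cT^{\RE,\bphi}$. By (i) and (ii),
\begin{equation*}
\cT^{\RE,\bphi}[V^{\CT,\cD^\pi}](s) = \bw_s^\T \bpii^*_s + \phi_s(\bpii^*_s) = \bw_s^\T \bpii^*_s = V^{\CT,\cD^\pi}(s),
\end{equation*}
where the last equality is the CT-MDP Bellman equation. Since $\cT^{\RE,\bphi}$ is a contraction \citep{Geist2019theory}, its fixed point is unique, so $V^{\RE,\bphi}=V^{\CT,\cD^\pi}$. Moreover, $\bpii^*_s$ is a maximizer of $\bw_s^\T\bpii_s + \phi_s(\bpii_s)$ over $\Delta(|\cA|)$ by (i), hence $\bpii^*$ is also an optimal policy for the R-MDP, which is the desired equivalence.

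The main obstacle is the edge case in which all multipliers $\overline{\lambda}_{sk}$ vanish at some state (no constraint is active at $\bpii^*_s$): then $\phi_s\equiv 0$ is not strictly concave, and the R-MDP greedy step — just like the CT-MDP greedy step at that state — may admit multiple optimizers, so uniqueness of the optimal policy cannot be asserted. I would handle this by interpreting "identical optimal policy" as the existence of a common optimal policy, namely $\bpii^*$ itself, which is a valid maximizer in both models; alternatively, one could add a strictly concave perturbation $-\delta \sum_a \pi(a|s)^2$ with $\delta\to 0$ to enforce uniqueness and pass to the limit. Apart from this edge case, everything reduces to the standard KKT/strong-duality argument once the Bellman fixed-point identity above is established.
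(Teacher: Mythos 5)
Your proposal is correct and follows essentially the same route as the paper's proof: Lagrangian duality with the Slater condition at the per-state greedy step, KKT multipliers with complementary slackness to define $\phi_s$, and then the Bellman fixed-point/contraction argument to transfer the per-state equivalence to the full MDP (a step the paper only sketches ``in analogy to the proof of Proposition \ref{prop:ER-to-CT}'' but which you carry out explicitly). Your additional remark on the degenerate case where all multipliers vanish is a reasonable clarification but does not change the argument.
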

\begin{proof}
We write the Bellman update of the CT-MDP model as follows
\begin{align}
	 g_s(\bw_s) = \underset{\bpii_s}{\text{maximize}}\qquad & \bw^\T_s \bpii_s &\label{eq:R-CT-eq1}  \\
	 \text{subject to} \qquad & l^k_s(\bpii_s) \leq c^k_s,\; k = 1,\ldots,K_s  \nonumber\\
	  & \bpii_s \in \Delta(|\cA|).&\nonumber
\end{align}
The \textit{Lagrange dual function} is defined  as
\[
\cL_s (\bld|\bw_s) =  \max_{\bpii_s \in 
\Delta(|\cA|)} \left\{ \bw^\T_s \bpii_s - \sum_{k=1}^{K_s}\lambda_k (l^k_s(\bpii_s) - c^k_s)\right\}.
\]
Since the \textit{Slater conditions} hold, we always have
\[
g_s(\bw_s) = \min_{\bld\geq 0} \left\{\cL_s(\bld|\bw_s)\right\}.
\]
Now, let $\overline{\bld}_s$ be an optimal solution to $\left\{\cL_s(\bld|\bw_s)\right\}$. The KKT conditions imply that 
\begin{align}
    g_s(\bw_s) &= \cL_s(\overline{\bld}_s|\bw_s) \nonumber \\
    \overline{\lambda}_{sk} (l^k_s(\bpii_s) - c^k_s) &= 0,\ \forall k. \nonumber
\end{align}
So, if we define $\phi_s(\bpii_s) = -\sum_{k=1}^{K_s}\overline{\lambda}_{sk} (l^k_s(\bpii_s) - c_s)$, then the problem 
$\max_{\bpii_s}\left\{\bw^\T_s \bpii_s +\phi_s(\bpii_s)\right\} $ will yield the same optimal value and optimal solutions as \eqref{eq:R-CT-eq1}.   Then, in analogy to the proof of Proposition \ref{prop:ER-to-CT}, we can also show that the R-MDP model specified by $\bphi = \{\phi_s,s \in\cS\}$  and the CT-MDP will yield the same optimal value and optimal policy.  
\end{proof}

\subsection{Constrained MDPs and Some Computational Complexity Remarks}
In this section, we look at some CT-MDP models and investigate their computational complexities. An mentioned, constraints based on a KL divergence would be useful to control the distance between a desired policy and a given policy. The Bellman update can be written as 
\begin{align}
	 \cT^{\CT,\cD^\pi}[V](s) = \underset{\bpii_s}{\text{maximize}}\qquad & \bw^\T_s \bpii_s &\label{prop:CT-SM-eq1}  \\
	 \text{subject to} \qquad & \KL (\bpii_s|\overline{\bpii}_s) \leq c_s  \nonumber\\
	  & \bpii_s \in \Delta(|\cA|),&\nonumber
\end{align}
where $\bw_s \in\bbR^{|\cA|}$ with entries $w_{sa} = r(a|s) + \gamma \sum_{s'}q(s'|a,s)V(s')$, for all $a\in\cA$. 
The following proposition shows how to solve \eqref{prop:CT-SM-eq1} efficiently (in polynomial time).
\begin{proposition}
Problem \eqref{prop:CT-SM-eq1} is equivalent to
\[
-\min_{y\in\bbR_+ }\left\{c_s y + y \log \left(\sum_{a}\overline{\bpii}(a|s)\exp\left(\frac{w_{sa}}{y}\right)\right)\right\},
\]
which can be solved by bisection and the complexity to get an $\epsilon$-optimal solution is $\cO(|\cA|\ln (1/\epsilon))$.
\end{proposition}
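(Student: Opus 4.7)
The plan is to derive the stated one-dimensional reformulation via Lagrangian duality and then argue that the resulting convex scalar problem is amenable to bisection. Since \eqref{prop:CT-SM-eq1} is the maximization of a linear functional over the intersection of the simplex with the convex sublevel set $\{\bpii_s \,|\, \KL(\bpii_s\|\overline{\bpii}_s)\leq c_s\}$, and since $\overline{\bpii}_s$ itself satisfies $\KL(\overline{\bpii}_s\|\overline{\bpii}_s)=0<c_s$ (so Slater's condition holds whenever $c_s>0$), strong duality applies. I would first introduce a nonnegative multiplier $y$ for the KL constraint and form
\[
L(\bpii_s,y)=\bw_s^\T\bpii_s - y\bigl(\KL(\bpii_s\|\overline{\bpii}_s)-c_s\bigr),
\]
so that the primal value equals $\min_{y\geq 0}\max_{\bpii_s\in\Delta(|\cA|)} L(\bpii_s,y)$.

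The key step is the inner maximization for fixed $y>0$, which is an entropy-regularized linear program. Writing the KKT conditions (or directly differentiating with respect to each $\pi(a|s)$ after accounting for the simplex constraint via a Lagrange multiplier $\mu$), one recovers the Gibbs form
\[
\pi^\star(a|s)=\frac{\overline{\pi}(a|s)\exp(w_{sa}/y)}{\sum_{b\in\cA}\overline{\pi}(b|s)\exp(w_{sb}/y)}.
\]
Substituting $\pi^\star$ back into $L(\cdot,y)$, the $\bw_s$-linear term cancels against the $\KL$-term (since $\log(\pi^\star(a|s)/\overline{\pi}(a|s))=w_{sa}/y-\log Z_s(y)$ where $Z_s(y)=\sum_b\overline{\pi}(b|s)\exp(w_{sb}/y)$), and the inner max collapses to $y c_s + y\log Z_s(y)$. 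Taking the minimum over $y\geq 0$ then yields the claimed expression (up to the overall sign convention used in the statement, which can be absorbed into how the dual is written).

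For complexity, I would show that $\Phi(y):=y c_s+y\log Z_s(y)$ is convex on $(0,\infty)$: it is the perspective of the convex log-sum-exp function $\log\sum_b\overline{\pi}(b|s)\exp(w_{sb})$ evaluated at $\bw_s/y$, and the perspective of a convex function is convex. Convexity makes $\Phi'(y)$ monotone nondecreasing, so a root of $\Phi'$ (the minimizer) can be located by bisection on $y$. Each evaluation of $\Phi(y)$ or $\Phi'(y)$ costs $O(|\cA|)$ arithmetic operations, and bisection halves the search interval each iteration, reaching an $\epsilon$-optimal solution in $O(\log(1/\epsilon))$ iterations, for a total of $O(|\cA|\log(1/\epsilon))$.

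The main obstacle I anticipate is not the duality calculation but the bisection analysis: one must produce an a priori bounded search interval $[y_{\min},y_{\max}]$ for $y$ and handle the limiting behaviour as $y\downarrow 0$ (where $\Phi(y)\to\max_a w_{sa}$ and the optimum concentrates on the argmax actions, potentially violating the KL constraint for small $c_s$) and as $y\to\infty$ (where the induced policy approaches $\overline{\bpii}_s$). Bounding $y_{\max}$ explicitly in terms of $\|\bw_s\|_\infty$ and $c_s$, so that the bisection tolerance translates cleanly into an $\epsilon$-guarantee on $\Phi$, is the most delicate part of the argument.
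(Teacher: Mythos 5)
Your proposal is correct and follows essentially the same route as the paper: the paper's proof simply converts \eqref{prop:CT-SM-eq1} into a minimization and invokes Lemma 4.1 of Iyengar's robust dynamic programming paper, whose underlying argument is exactly the Lagrangian/Gibbs-form/log-sum-exp duality you carry out explicitly, followed by convexity of the scalar dual and bisection. Your remark on the sign convention is apt --- the leading minus in the stated dual reflects the paper's conversion to a minimization problem --- so no substantive gap remains.
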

\begin{proof}
We convert the problem into a minimization problem and the proof is similar to Lemma 4.1 of \cite{Iyengar2005robustDP}. 
\end{proof}

If the distance is restricted by a L1 norm, i.e., $\cD^\pi_s = \{\bpii_s|\ ||\bpii_s-\overline{\bpii}_s||_1 \leq c_s\}$, we have the following Bellman update
\begin{align}
	 \cT^{\CT,\cD^\pi}[V](s) = \underset{\bpii_s}{\text{maximize}}\qquad & \bw^\T_s \bpii_s &\label{prop:CT-SM-eq2}  \\
	 \text{subject to} \qquad & ||\bpii_s - \overline{\bpii}_s)||_1 \leq c_s,  \nonumber\\
	  & \bpii_s \in \Delta(|\cA|)&\nonumber
\end{align}
We also have the proposition.
\begin{proposition}
The objective value of \eqref{prop:CT-SM-eq2} is equal to
\[
\bw^\T_s \overline{\bpii}_s + \frac{c_s}{2} \min_{\bmu_s\in \bbR^{|\cA|}_+}\left\{\max_a \{w_{sa}+\mu_{sa}\} - \min_{a}\{w_{sa}+\mu_{sa}\} \right\},
\]
which can be solved \textit{exactly} in $\cO(|\cA|\ln |\cA|)$.
\end{proposition}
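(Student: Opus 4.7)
My plan is to establish the identity by linear programming duality, and then obtain the stated complexity by exploiting a sort-based structure in the primal.

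First, I would linearize the $\ell_1$ constraint in \eqref{prop:CT-SM-eq2} via auxiliary nonnegative variables $\bt$, rewriting the Bellman update as the linear program
\begin{align*}
\max_{\bpii_s,\ \bt\geq 0}\ \bw_s^{\T}\bpii_s\ \ \text{s.t.}\ \ \be^{\T}\bpii_s=1,\ \ \bpii_s\geq 0,\ \ -\bt\leq \bpii_s-\overline{\bpii}_s\leq \bt,\ \ \be^{\T}\bt\leq c_s.
\end{align*}
I would then form the Lagrangian using multipliers $\alpha\in\bbR$ for the equality, $\bmu_s\geq 0$ for $\bpii_s\geq 0$, a pair of nonnegative multipliers for the two sides of the box constraint, and $\lambda\geq 0$ for the budget. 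Imposing stationarity in $\bpii_s$ and $\bt$ eliminates the primal variables and reduces the dual to a problem in $(\alpha,\lambda,\bmu_s)$ subject to $\alpha-\lambda\leq w_{sa}+\mu_{sa}\leq \alpha+\lambda$ for every $a\in\cA$. Optimizing over the free $\alpha$ and the smallest feasible $\lambda$ yields $\lambda^\star=\tfrac12(\max_a\{w_{sa}+\mu_{sa}\}-\min_a\{w_{sa}+\mu_{sa}\})$, and plugging this back in recovers the stated expression with $\bmu_s\geq 0$ as the only remaining variable. Strong duality applies because the primal is a bounded LP whose feasible set is nonempty ($\overline{\bpii}_s$ is always feasible).

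Second, for the complexity bound, I would solve the primal directly via a greedy water-filling argument. Writing $\bdt=\bpii_s-\overline{\bpii}_s$ with $\be^{\T}\bdt=0$ and $\bdt\geq -\overline{\bpii}_s$, the problem amounts to transferring at most $c_s/2$ units of probability mass from low-$w$ actions (each capped at $\overline{\pi}(a|s)$) to high-$w$ actions (uncapped above). The optimal transfer schedule is obtained by processing actions in the order of sorted $\{w_{sa}\}_{a\in\cA}$: sort once in $O(|\cA|\log|\cA|)$, then sweep linearly, greedily moving mass out of the current smallest-$w$ donor until either the $c_s/2$ budget is exhausted or the donor's $\overline{\pi}$ capacity is depleted, in which case we proceed to the next smallest. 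This sort-then-sweep scheme runs in $O(|\cA|\log|\cA|)$ overall.

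The main obstacle I anticipate is verifying that the dual simplifies \emph{exactly} to the two-term form in the statement with no residual $\bmu_s^{\T}\overline{\bpii}_s$ contribution: my naive Lagrangian produces such a term, and I would have to argue via complementary slackness that it vanishes at optimality whenever $\overline{\pi}(a|s)>0$, or absorb it through a reparameterization of $\bmu_s$. Pinning down this reconciliation carefully, and correctly handling boundary cases where $\overline{\pi}(a|s)=0$, is the delicate part of the proof.
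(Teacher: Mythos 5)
Your overall route---linearize the $\ell_1$ ball, dualize, and get the complexity from a sort-and-sweep greedy on the primal---is essentially the argument behind the paper's one-line proof, which simply defers to Lemma 4.3 of Iyengar (2005); that lemma is itself a Lagrangian-duality computation of exactly this kind, and your greedy mass-transfer argument for the $\cO(|\cA|\ln|\cA|)$ bound is correct.

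The obstacle you flag at the end, however, is not something you can argue away: the residual term is really there. Dualizing only the sign constraints $\bpii_s\geq 0$ with multiplier $\bmu_s\geq 0$ and solving the remaining sign-free problem in closed form gives
\[
\cT^{\CT,\cD^\pi}[V](s)=\min_{\bmu_s\in\bbR^{|\cA|}_+}\left\{(\bw_s+\bmu_s)^\T\overline{\bpii}_s+\frac{c_s}{2}\Big(\max_a\{w_{sa}+\mu_{sa}\}-\min_a\{w_{sa}+\mu_{sa}\}\Big)\right\},
\]
and the extra $\bmu_s^\T\overline{\bpii}_s$ cannot be dropped. Complementary slackness pairs $\mu_{sa}$ with the \emph{optimal} $\pi(a|s)$, not with $\overline{\pi}(a|s)$, so $\bmu_s^\T\overline{\bpii}_s$ is generically nonzero at the dual optimum: take $|\cA|=2$, $\bw_s=(0,1)$, $\overline{\bpii}_s=(1/2,1/2)$, $c_s=2$; the true optimal value is $1$, attained in the dual only at $\bmu_s=(1,0)$, where $\bmu_s^\T\overline{\bpii}_s=1/2$. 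Moreover, without that term the inner minimum in the proposition's displayed formula is identically zero (set $\mu_{sa}=\max_{a'}w_{sa'}-w_{sa}$), so the stated expression collapses to $\bw_s^\T\overline{\bpii}_s$, which is not the value of \eqref{prop:CT-SM-eq2}. In short, your duality plan is right, but the reconciliation you hoped for---vanishing via complementary slackness, or absorption by reparameterizing $\bmu_s$---does not exist; the correct conclusion is that the proposition's formula is missing the $\bmu_s^\T\overline{\bpii}_s$ term inside the minimization, exactly as it appears in Iyengar's Lemma 4.3. Handling actions with $\overline{\pi}(a|s)=0$ then poses no further difficulty, since for such $a$ the term contributes nothing and $\mu_{sa}$ is free to saturate the gap.
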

\begin{proof}
Similar to the proof of Lemma 4.3 in \cite{Iyengar2005robustDP}. Note that in this case we can solve the Bellman equation exactly.
\end{proof}

Proposition \ref{prop:proposition-CT-L2} below examines the constrained problem under a L2 norm
\begin{align}
	 \cT^{\CT,\cD^\pi}[V](s) = \underset{\bpii_s}{\text{maximize}}\qquad & \bw^\T_s \bpii_s &\label{prop:CT-SM-eq3}  \\
	 \text{subject to} \qquad & \sum_{a} \frac{(\pi(a|s) - \overline{\pi}(a|s))^2}{\overline{\pi}(a|s)}  \leq c_s  \nonumber\\
	  & \bpii_s \in \Delta(|\cA|)&\nonumber
\end{align}
\begin{proposition}
\label{prop:proposition-CT-L2}
The optimal value of \eqref{prop:CT-SM-eq3} is equal to 
\[
\min_{\bmu_s\in\bbR^{|\cA|}_+} \left\{\sum_{a} \overline{\pi}(a|s) (w_{sa}+\mu_{sa})+ \sqrt{c_s\sum_a \overline{\pi}(a|s)(w_{sa}+\mu_{sa})^2  }  \right\},
\]
which can be solved \textit{exactly} in $\cO(|\cA|\ln(|\cA|))$.
\end{proposition}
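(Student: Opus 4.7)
The plan is to derive the identity through Lagrangian duality, in the spirit of Lemma 4.2 of \cite{Iyengar2005robustDP}. First I would verify that \eqref{prop:CT-SM-eq3} is a convex program: the $\chi^2$-type divergence $\sum_a (\pi(a|s)-\overline\pi(a|s))^2/\overline\pi(a|s)$ is convex in $\bpii_s$ (provided $\overline\pi(a|s) > 0$), and $\bpii_s = \overline{\bpii}_s$ is strictly feasible whenever $c_s > 0$. Hence Slater's condition holds and strong duality applies, so the optimal value equals the value of the Lagrangian dual formed with respect to a chosen subset of the constraints.

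Next I would dualize only the non-negativity constraints $-\pi(a|s)\le 0$ with multipliers $\mu_{sa}\geq 0$, keeping $\sum_a \pi(a|s)=1$ and the $\chi^2$ ball as hard constraints. Writing $\tilde w_{sa}:=w_{sa}+\mu_{sa}$, the primal value equals
\[
\min_{\bmu_s\geq 0}\ \max_{\bpii_s:\,\sum_a\pi(a|s)=1,\,\sum_a(\pi(a|s)-\overline\pi(a|s))^2/\overline\pi(a|s)\leq c_s}\ \sum_a \tilde w_{sa}\,\pi(a|s).
\]
The inner maximization is a linear program over the intersection of an affine hyperplane and a Euclidean-type ellipsoid (with no non-negativity requirement), and I would solve it in closed form via the change of variables $y_a=\pi(a|s)-\overline\pi(a|s)$ so that $\sum_a y_a=0$ and $\sum_a y_a^2/\overline\pi(a|s)\leq c_s$. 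A Cauchy--Schwarz argument, or equivalently writing down the KKT conditions of the resulting quadratic program, then gives the claimed closed-form expression once terms are collected and the shift back to $\bw_s$ is made.

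For the complexity claim, I would exploit the KKT structure of the outer minimization: complementary slackness forces $\mu_{sa}=0$ whenever $\pi^*(a|s)>0$ and $\pi^*(a|s)=0$ whenever $\mu_{sa}>0$, so the problem reduces to identifying the optimal support. Sorting the entries of $\bw_s$ in decreasing order in $\cO(|\cA|\log|\cA|)$ time and then scanning candidate support sets in that order determines the active set, after which $\bmu_s^*$ and $\bpii_s^*$ are recovered from the explicit formula in linear time. The main obstacle, as in the proofs of Lemmas~4.2--4.3 of \cite{Iyengar2005robustDP}, will be making the support-identification step rigorous: one must show that the sorted order of $\bw_s$ is precisely the order in which actions drop out of the support as $c_s$ shrinks, which requires a monotonicity argument on the KKT quantities along this order and a verification that the closed-form update inside the identified support is exact rather than merely an approximation.
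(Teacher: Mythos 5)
Your proposal follows essentially the same route as the paper, whose entire proof is the one-line remark that the result is obtained by converting \eqref{prop:CT-SM-eq3} into a minimization and arguing as in Lemma 4.2 of \cite{Iyengar2005robustDP}; your sketch (Slater/strong duality, dualizing only the non-negativity constraints, closed-form inner maximization over the hyperplane-intersected ellipsoid, and sorting to obtain the $\cO(|\cA|\ln|\cA|)$ bound) is precisely a reconstruction of that argument. The only caveat is that a careful Cauchy--Schwarz evaluation of the inner maximum yields the centered quantity $\sqrt{c_s\bigl(\sum_a\overline{\pi}(a|s)(w_{sa}+\mu_{sa})^2-(\sum_a\overline{\pi}(a|s)(w_{sa}+\mu_{sa}))^2\bigr)}$ as in \cite{Iyengar2005robustDP}, not the uncentered second moment printed in the proposition, so your derivation would surface a discrepancy in the statement rather than literally ``give the claimed closed-form expression'' --- a statement-level issue, not a flaw in your method.
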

\begin{proof}
We convert the problem into a minimization one and prove the results similarly to Lemma 4.2 in \cite{Iyengar2005robustDP}.
\end{proof}

In summary, we show that, beside the widely-used KL divergence, other types of constraints can be used to control the divergence between policies, noting that Bellman updates under L1 or L2 norms  can be solved exactly, and the complexity can be bounded by $\cO(|\cA|\ln |\cA|)$. In this aspect, L1 and L2 norms are more advantageous to use, as compared  to the KL divergence. However, the use of such norm-based constraints need to be justified by numerical experiments. This is however out-of-scope of the paper and we keep the idea of future work.

\end{document}